\newtheorem{theorem}{\sc Theorem}[section]
\newtheorem{proposition}[theorem]{\sc Proposition}
\newtheorem{lemma}[theorem]{\sc Lemma}
\newtheorem{corollary}[theorem]{\sc Corollary}
\theoremstyle{definition}
\newtheorem{definition}[theorem]{\sc Definition}
\newtheorem{example}[theorem]{\sc Example}
\theoremstyle{remark}
\newtheorem{remark}[theorem]{\sc Remark}
\newtheorem{notations}[theorem]{\sc Setting}
\newenvironment{invisible}{{\noindent\sc \colorbox{yellow}{Invisible:}\;}\color{gray}}{\medskip}
\newcommand{\id}{{\sf id}}
\newcommand{\Id}{{\sf Id}}
\newcommand{\Cc}{\mathcal{C}}
\newcommand{\Dd}{\mathcal{D}}
\newcommand{\Aa}{\mathcal{A}}
\newcommand{\Bb}{\mathcal{B}}
\newcommand{\Ee}{\mathcal{E}}
\newcommand{\Ss}{\mathcal{S}}
\newcommand{\ot}{\otimes}
\def\Vec{{\sf Vec}}
\def\Alg{{\sf Alg}}
\def\Coalg{{\sf Coalg}}
\def\Bialg{{\sf Bialg}}
\def\Vec{{\sf Vec}}
\def\indobj{{\sf IndObj}}
\def\epindobj{{\sf IndObj^e}}
\def\indalg{{\sf IndAlg}}
\def\epindalg{{\sf IndAlg^e}}
\def\cocone{{\sf Cocone}}
\def\uoR{\underline{\overline{R}}}
\def\uoL{\underline{\overline{L}}}
\def\hom{{\rm Hom}}
\def\pulb{\ar@{}[dr]|(0.2){\mbox{\Large{$\lrcorner$}}}}
\newcommand{\ev}{{\rm ev}}
\newcommand{\unit}{\mathds{I}}
\newcommand{\op}{\mathrm{op}}
\newcommand{\diamw}{{\scriptscriptstyle \lozenge}}
\newcommand{\diamb}{{\scriptscriptstyle \blacklozenge}}
\begin{document}
\title[Liftable pairs of functors and Initial objects]{Liftable pairs of functors and Initial objects}

\thanks{This article was written while the first and the third author were
members of the National Group for Algebraic and Geometric Structures, and
their Applications (GNSAGA-INdAM). They were both partially supported by
MIUR within the National Research Project PRIN 2017. The first author was
partially supported by the research grant ``Progetti di Eccellenza
2011/2012'' from the ``Fondazione Cassa di Risparmio di Padova e Rovigo''.
He thanks the members of the department of Mathematics of both Vrije
Universiteit Brussel and Universit\'{e} Libre de Bruxelles for their warm
hospitality and support during his stay in Brussels in August 2013, when the
work on this paper was initiated. The second named author acknowledges the
financial support of an INdAM Marie Curie Fellowship. \\
The authors would also like to thank Joost Vercruysse and Miodrag C. Iovanov
for helpful discussions and the referee for pointing out a mistake in a
previous version of this article.}

\begin{abstract}
Let $\Aa  $ and $\Bb  $ be monoidal categories and let $R:\Aa  \rightarrow \Bb$ be a lax monoidal functor. If $R$ has a left adjoint $L$, it is well-known that the two adjoints induce functors $\overline{R}=\Alg(R):\Alg ({\Aa  })\rightarrow {\Alg }({\Bb  })$ and $\underline{L}=\Coalg(L):{%
\Coalg }({\Bb  })\rightarrow \Coalg ({\Aa  })$ respectively. The pair $(L,R)$ is called \textit{%
liftable} if the functor $\overline{R}$ has a left adjoint and if the
functor $\underline{L}$ has a right adjoint. A pleasing fact is that, when $\Aa  $, $\Bb  $ and $R$ are moreover braided, a liftable pair of functors as above gives rise to
an adjunction at the level of bialgebras.

In this note, sufficient conditions on the category $\Aa  $ for $%
\overline{R}$ to possess a left adjoint, are given. Natively these conditions involve the existence of suitable colimits that we interpret as objects which are simultaneously initial in four distinguished categories (among which the category of epi-induced objects), allowing for an explicit construction of $\overline{L}$, under the appropriate hypotheses. This is achieved by introducing a relative version of the notion of weakly coreflective subcategory, which turns out to be a useful tool to compare the initial objects in the involved categories.

We apply our results to obtain an analogue of Sweedler's finite dual for the
category of vector spaces graded by an abelian group $G$ endowed with a bicharacter. When the bicharacter on $G$ is skew-symmetric, a lifted adjunction as mentioned above is explicitly
described, inducing an auto-adjunction on the category of bialgebras
``colored'' by $G$.
\end{abstract}

\keywords{Monoidal categories, liftable pairs, initial objects, weakly coreflective subcategories, group graded vector spaces.}
\author{Alessandro Ardizzoni}
\address{%
\parbox[b]{\linewidth}{University of Turin, Department of Mathematics ``G. Peano'', via
Carlo Alberto 10, I-10123 Torino, Italy}}
\email{alessandro.ardizzoni@unito.it}
\urladdr{\url{http://sites.google.com/site/aleardizzonihome}}
\author{Isar Goyvaerts}
\address{%
\parbox[b]{\linewidth}{Department of Mathematics, Faculty of
Engineering, Vrije Universiteit Brussel, Pleinlaan 2, B-1050 Brussel,
Belgium}}
\email{Isar.Goyvaerts@vub.be}
\author{Claudia Menini}
\address{%
\parbox[b]{\linewidth}{University of Ferrara, Department of Mathematics and Computer Science, Via Machiavelli
30, Ferrara, I-44121, Italy}}
\email{men@unife.it}
\urladdr{\url{http://sites.google.com/a/unife.it/claudia-menini}}
\subjclass{Primary 18M05; Secondary 16W50}
\maketitle
\tableofcontents

\section*{Introduction}

Let $\Aa  $ and $\Bb  $ be monoidal categories and let $L\dashv R:\Aa  \rightarrow \Bb$ be adjoint functors. It is well-known that, if $L$ can
be endowed with the structure of a colax monoidal functor, then $R$ becomes
a lax monoidal functor, and the other way around. Letting $(R,\phi_2 ,\phi_0
):\Aa  \rightarrow \Bb  $ now be a lax monoidal functor, $R$
induces a functor $\overline{R}=\Alg(R):\Alg (\Aa)\rightarrow
\Alg (\Bb)$ between the respective categories of algebra
objects. Dually, a colax monoidal functor $(L,\psi _{2},\psi _{0}):\mathcal{B%
}\rightarrow \Aa  $ colifts to a functor $\underline{L}=\Coalg(L):\Coalg (\Bb)\rightarrow \Coalg (\Aa)$ between the respective categories of coalgebra objects.
In the article \cite{GV-OnTheDuality}, an adjoint pair of functors $(L,R)$
between monoidal categories $\Aa  $ and $\Bb  $ such that $R$ is
a lax monoidal functor (or, equivalently, $L$ is colax monoidal) is called
\textit{liftable} if the functor $\overline{R}$ has a left adjoint, say $\overline{L}$, and if
the functor $\underline{L}$ has a right adjoint, say $\underline{R}$. If $\Aa  $ and $%
\Bb  $ come both endowed with a braiding, it is shown in loc. cit.
that such a liftable pair of functors $(L,R)$ gives rise to an adjunction
between the respective categories of bialgebra objects
\begin{equation*}
\xymatrixcolsep{1.5cm}\xymatrix{ \Bialg(\Aa) \ar@<.5ex>[rr]^-{\uoR=\Alg(\underline{R})} && \Bialg(\Bb)
\ar@<.5ex>[ll]^-{\uoL=\Coalg(\overline{L})} }
\end{equation*}
provided the functor $R$ enjoys the property of being braided with respect
to the braidings of $\Aa  $ and $\Bb  $ (cf. \cite[Theorem 2.7]%
{GV-OnTheDuality}).

A prototypical example of a liftable pair of functors is obtained by taking $%
\Bb  $ to be the symmetric monoidal category $\Vec$ of $k$-vector
spaces ($k$ a field) where the symmetry is just the twist. Putting $\mathcal{%
A}$ to be the opposite category $\Vec^\op$ of $\Vec  $ and taking the vector space
dual $X^{*}=\hom _{k}(X,k)$, one obtains a (covariant) adjunction $L\dashv R:\Aa\to\Bb$ with $L=(-)^*$ and  $R=(-)^*$.
The functor $R$ satisfies the necessary conditions to induce a functor $%
\overline{R}=\Alg(R):\Alg (\Aa  )\to \Alg (\Bb  )$.
Explicitly, one obtains that $\overline{R}$ is the well-known functor that
computes the dual algebra of a $k$-coalgebra (remark that the functor $%
\underline{L}=\Coalg(L):\Coalg (\Bb  )\to\Coalg (\Aa  )$ is
exactly the same functor). A left adjoint $\overline{L}=(-)^\circ$ for $\overline{R}$ is given by the
functor that assigns the so-called finite dual coalgebra $A^\circ$ to a $k$%
-algebra $A$; this construction is originally due to Sweedler, see \cite%
{Sweedler-Hopf}. Noticing that the very same construction provides a right
adjoint for the functor $\underline{L}$, one obtains that the pair $(L,R)$
is indeed liftable and, applying the above-cited theorem, one recovers the
result that the finite dual induces an auto-adjunction on the category of $k$%
-bialgebras (cf. \cite[page 87]{Abe-Hopf}, for instance).
Generalizations of this construction have
been studied by different authors, see e.g. \cite{AGW, CN, Po2} and \cite{PS}.

Let us go back to our general setting of a functor $R:\Aa\to \Bb$ as at the beginning of this introductory section and notice that being liftable really \textit{is} a condition: there exist examples of lax monoidal functors $R$ between monoidal categories that have
a left adjoint $L$, but for which $\overline{R}$ does not have a left
adjoint (cf. \cite[Example 4.2]{AGM}). One aim of this paper is to give sufficient conditions on the category $\Aa  $ for the functor $\overline{R}=\Alg(R):\Alg(\Aa)\to\Alg(\Bb)$ to possess a left adjoint $\overline{L}$. As we will see, these conditions involve the existence of  suitable colimits that we manage to interpret as objects which are simultaneously initial in four distinguished categories, among them the category of epi-induced objects, which allows for an explicit construction of $\overline{L}$, under the appropriate hypotheses. This is performed by introducing a relative version of the notion of weakly coreflective subcategory that allows, among other things, to identify the initial objects in these categories and that we find is of independent interest.

In the article \cite{AGM}, a context, appeared in its original form in \cite{GV-OnTheDuality}, where the liftability assumption can be proved to hold is studied: a so-called \textit{pre-rigid} braided
monoidal category $\Cc $ always allows for a liftable pair of adjoint
functors $L\dashv R:\Cc ^\op\rightarrow \Cc$, with $L=(-)^{*}$ and $R=(-)^{*}$,
 provided $\overline{R}$ has a left adjoint. In the present paper, we consider $\Cc $
to be the category of vector spaces graded by an abelian group $G$. When
being given a skew-symmetric bicharacter on $G$, the lifted adjunction between bialgebras
in $\Cc $ can be explicitly computed and provides a $G$-graded
version of Sweedler's classical finite dual construction. To the best of our
knowledge, this application does not appear elsewhere in literature. Let us
sketch in more detail how we go about this computation. \medskip

In Section \ref{preliminares}, we start by recalling some of the notions we use in the paper, among them the one of liftable pair of adjoint functors and its behaviour on braided categories. Then, in Subsection  \ref{tamDub}, we present sufficient conditions for $\overline{R}$
to possess a left adjoint, provided some extra conditions on the category $%
\Aa  $ hold (cf. Theorem \ref{teo:Tambara}). This is obtained by
slightly improving results by Dubuc \cite{Dubuc-Adjoint,Dubuc-KanExt} and by Tambara \cite{Tambara}. An advantage of our treatment in this
section is the fact that the construction of the adjoint $\overline{L}$ can
be given explicitly by means of a specific colimit in ${\Alg }({%
\Aa  })$.
In Section \ref{sec:relweakcoref}, we
consider the notion of weakly coreflective subcategory and we introduce a relative version of it in order to  reinterpret this colimit as a suitable initial object and obtain an explicit description for $\overline{L}\,\overline{B}$, for every algebra $\overline{B}$ in $\Bb  $. This turns out to be a useful tool to compare the initial objects in the involved categories. An instance of this fact is Proposition \ref{pro:initial}, where we prove that, if $\Cc$ is a replete posetal weakly coreflective full subcategory of a category $\Dd$, then $\Cc$ and $\Dd$ have the same initial objects, if any.
 Then we study pullbacks of a relative weakly coreflective subcategory along relative fibrations.
More precisely, in Proposition \ref{pro:pullbackweak}, we prove that, if $\Cc$ is a weakly $\Ee$-coreflective full subcategory of a category  $\Dd$, then the pullback $\Cc'$ of $\Cc$ along an $\Ee$-fibration $V:\Dd'\to\Dd$ is a weakly coreflective full subcategory of $\Dd'$. \begin{equation*}
\xymatrixrowsep{.6cm}\xymatrix{ \Cc'\pulb\ar@{^(->}[d] \ar[r]^-{U} &\Cc\ar@{^(->}[d] \\\Dd' \ar[r]^-{V} &\Dd}
\qquad \qquad
\xymatrixrowsep{.6cm}\xymatrix{ \epindalg(\overline{B})\pulb\ar@{^(->}[d] \ar[r]^-{U} &\epindobj(\overline{B})\ar@{^(->}[d] \\\indalg(\overline{B}) \ar[r]^-{V} &\indobj(\overline{B})}
\end{equation*}

These results will be applied in Section \ref{sec:crucial-initial} to the particular pullback represented in the diagram above, which  involves the categories $\indobj(\overline{B})$, $\epindobj(\overline{B})$, $\indalg(\overline{B}) $ and  $\epindalg(\overline{B}) $. We have mentioned that the functor $\overline{L}$ can be given explicitly in terms of a specific colimit. First, in Proposition \ref{pro:induced2}, we reinterpret this colimit as an initial object  in the category $\indalg(\overline{B}) $ of induced algebras of $\overline{B}$  leading us to Theorem \ref{teo:redinv}. Then, under suitable assumptions, we will see in Theorem \ref{teo:red} that $\indalg(\overline{B}) $ can be replaced by other three categories, precisely $\indobj(\overline{B})$, $\epindobj(\overline{B})$ and  $\epindalg(\overline{B}) $, that can be more easy to handle in practice, among them the category $\epindobj(\overline{B})$ of epi-induced objects. Then, in Proposition \ref{pro:constinepi}, we provide a construction of an initial object in $\epindobj(\overline{B})$. Putting together these results we obtain Proposition \ref{pro:brut} giving an explicit description for $\overline{L}\,\overline{B}$. By taking $\Cc^\op$ instead of $\Aa$, we get Proposition \ref{pro:brutop} that will be applied to the main example we are concerned in Section \ref{applicationsection}, namely the category $\Vec_G$ of $G$-graded vector spaces. This category is a particular instance of a pre-rigid category as it is monoidal closed. This led us to look for an analogue of Sweedler's finite dual in the general context of pre-rigid
braided monoidal categories. In \cite{AGM}, a monoidal category $\Cc $
is called pre-rigid if for every object $X$ there exists an object $X^{\ast
} $ and a morphism $\mathrm{ev}_{X}:X^{\ast }\otimes X\rightarrow \unit $
such that the map
\begin{equation*}
\hom _{\Cc }\left( T,X^{\ast }\right) \rightarrow \hom %
_{\Cc }\left( T\otimes X,\unit \right) :u\mapsto \mathrm{ev}%
_{X}\circ \left( u\otimes X\right)
\end{equation*}%
is bijective for every object $T$ in $\Cc $. In this framework, consider the functor $R:=(-)^*:\Cc ^{\op }\to \Cc $. It turns out, see Proposition \ref{prop:extensionGV} and Lemma \ref{lem:Barop}, that the adjunction $(L,R)$ is liftable,  whenever the functor $\overline{R}$ has a left adjoint. In
Proposition \ref{pro:Men0} we present conditions guaranteeing that this happens. Moreover, in Corollary \ref{coro:Men2} we find a pre-rigid analogue of \cite[Proposition 8]{PS}.

In Subsection \ref{GVec} we deal with the case when $\Cc $ is taken
to be the braided monoidal category $\Vec_G^\alpha$ of vector spaces graded by an abelian group $G$, where the braiding depends on a  bicharacter $\alpha:G\times G\to k\setminus\{0\}$ on $G$. In case $\alpha$ is skew-symmetric, our theory gives rise to
auto-adjunctions on the categories of bialgebras ``colored'' by $G$. As a
consequence of arguments settled in the slightly more general setting
in Remark \ref{rem:regop}, the lifted functors in this example can be described
explicitly. The paper concludes with hinting at
why one could expect that explicit descriptions as in case of $\Vec_G$
could be carried out, more generally, for the category of comodules over a
coquasi-bialgebra.

\section{Preliminaries and first results}\label{preliminares}
We begin our exposition by recalling some notions we need in the paper, among them the one of liftable pair of adjoint functors and its behaviour on braided categories, from \cite{GV-OnTheDuality}. Then we will present sufficient conditions for the functor induced by a lax monoidal right adjoint at the level of algebras to possess a left adjoint, see Theorem \ref{teo:Tambara}. This is obtained by slightly improving results by Dubuc and by Tambara.

\subsection{Some notational conventions}

When $X$ is an object in a category $\Cc $, we will denote the
identity morphism on $X$ by $1_X$ or $X$ for short. For categories $\mathcal{%
C}$ and $\mathcal{D}$, a functor $F:\Cc \to \mathcal{D}$ will be the
name for a covariant functor; it will only be a contravariant one if it is
explicitly mentioned. By $\mathsf{id}_{\Cc }$ we denote the identity
functor on $\Cc $. For any functor $F:\Cc \to \mathcal{D}$, we
denote $\mathsf{Id}_{F}$ (or sometimes -in order to lighten notation in some
computations- just $F$, if the context does not allow for confusion) the
natural transformation defined by $\mathsf{Id}_{FX}=1_{FX}$. \newline
Let $\Cc $ be a category. Denote by $\Cc ^{\op }$ the
opposite category of $\Cc $. Using the notation of \cite[page 12]%
{Pareigis-CatFunct}, an object $X$ and a morphism $f:X\rightarrow Y$ in $%
\Cc $ will be denoted by $X^{\op }$ and $f^{\op }:Y^{%
\op }\rightarrow X^{\op }$ when regarded as object and
morphism in $\Cc ^{\op }$. Given a functor $F:\Cc\to \Dd$, one defines its opposite functor $F^\op :\Cc^\op \to \Dd^\op $ by setting $F^\op X^\op =(FX)^\op $ and $F^\op f^\op =(Ff)^\op $. If $\alpha:F\to G$ is a natural transformation, its opposite $\alpha^\op$ is given by $(\alpha^\op)_{X^\op}:=(\alpha_X)^\op$ for every object $X$.\newline
\newline
Throughout the paper, we will work in the setting of monoidal categories.
With respect to the material presented below, it is useful to recall the
following notation. Let $(\mathcal{M},\otimes ,\unit ,a,l,r)$ be a
monoidal category. Following \cite[0.1.4, 1.4]{SaavedraRivano}, we have that
$\mathcal{M}^{\op }$ is also monoidal, the monoidal structure being
given by%
\begin{gather*}
X^\op\otimes Y^\op :=\left( X\otimes Y\right)
^\op,\qquad\text{ the unit object is }\unit ^\op \\
a_{X^\op,Y^\op,Z^\op} :=\left(
a_{X,Y,Z}^{-1}\right) ^{\op }, \qquad
l_{X^\op} :=\left( l_{X}^{-1}\right) ^{\op },\qquad
r_{X^\op}:=\left( r_{X}^{-1}\right) ^{\op }.
\end{gather*}%
If $\mathcal{M}$ is moreover braided (with braiding $c$), then so is $%
\mathcal{M}^{\op }$, the braiding being given by
\begin{equation*}
c_{X^\op ,Y^\op }:=\left( c_{X,Y}^{-1}\right) ^{%
\op }.
\end{equation*}
Unless explicitly stated, we will assume monoidal categories to be strict from now on. By Mac Lane's Coherence Theorem, this does not impose
restrictions on the obtained results. We will moreover consider braided
monoidal categories. A basic reference for these notions is \cite{MacLane},
for instance.\medskip

Recall (see e.g. \cite[Definition 3.1]{Aguiar-Mahajan})
that a functor $F:\Aa  \to\Bb  $ between monoidal categories $(%
\Aa  ,\otimes,{\unit }_{\Aa  })$ and $(\Bb  %
,\otimes^{\prime },{\unit }_{\Bb  })$ is said to be a lax monoidal
functor if it comes equipped with a family of natural morphisms $%
\phi_{2}(X,Y):F(X)\otimes^{\prime }F(Y)\to F(X\otimes Y)$, for $X,Y\in \mathcal{A%
}$, and a $\Bb  $-morphism $\phi_0:{\unit }_{\Bb  }\to F({%
\unit }_{\Aa  })$, satisfying the known suitable compatibility
conditions with respect to the associativity and unit constraints of $%
\Aa  $ and $\Bb  $.
Dually, colax monoidal functors are defined.\newline
Also recall that given a lax monoidal functor $(F,\phi_2,\phi_0)$, then $(F^{\op },\phi_2^{\op },\phi_0^{\op })$ is a colax monoidal functor, where we set $\phi_2^{\op }(X^{\op },Y^{\op }):=\phi_2(X,Y)^{\op }$, see e.g.  \cite[Proposition 3.7]{Aguiar-Mahajan}.

\subsection{Liftability of adjoint pairs}\label{sub:liftability}

Let $\left( L:\Bb  \rightarrow \Aa  ,R:\Aa  \rightarrow
\Bb  \right) $ be an adjunction with unit $\eta$ and counit $\epsilon$. It is known, see e.g. \cite[Proposition 3.84]%
{Aguiar-Mahajan}, that if $(L,\psi _{2},\psi _{0})$
is a colax monoidal functor, then $(R,\phi _{2},\phi _{0})$ is a lax
monoidal functor where, for every $X,Y\in \Aa  $,
\begin{gather*}
\phi _{2}\left( X,Y\right) :=\left(\xymatrixcolsep{35pt}\xymatrix{RX\otimes RY\ar[r]^-{\eta_{ \left(
RX\otimes RY\right)}}&RL\left( RX\otimes RY\right)\ar[r]^{R\psi _{2}\left( RX,RY\right)} &R\left( LRX\otimes
LRY\right)\ar[r]^-{R\left( \epsilon_{X}\otimes \epsilon_{Y}\right)} & R\left( X\otimes Y\right)}\right), \label{form:PhiFromPsi2}\\
\phi _{0} :=\left( \xymatrix{ \unit_{\Bb}\ar[r]^-{\eta_{ \unit_{\mathcal{%
B}}}}& RL\left( \unit_{\Bb}\right) \ar[r]^-{R\psi _{0}}&R\left( \unit_{\Aa}\right)} \right).   \label{form:PhiFromPsi0}
\end{gather*}%
Conversely, if $(R,\phi _{2},\phi _{0})$ is a lax monoidal functor, then $%
(L,\psi _{2},\psi _{0})$ is a colax monoidal functor where, for every $%
X,Y\in \Bb  $
\begin{gather}
\psi _{2}\left( X,Y\right) :=\left(\xymatrixcolsep{35pt}\xymatrix{ L\left( X\otimes Y\right) \ar[r]^-{%
L\left( \eta_{ X}\otimes \eta_{Y}\right) }&L\left( RLX\otimes
RLY\right) \ar[r]^-{L\phi _{2}\left( LX,LY\right) }%
&LR\left( LX\otimes LY\right) \ar[r]^-{\epsilon_{\left( LX\otimes LY\right) }}& LX\otimes LY}\right) ,   \label{form:PsiFromPhi2}\\
\psi _{0} :=\left( \xymatrix{L\left( \unit_{\Bb}\right) \ar[r]^-{L\phi _{0}%
}& LR\left( \unit_{\Aa}\right)\ar[r]^-{%
\epsilon_{\unit_{\Aa}}}&\unit_{\Aa%
}}\right) .  \label{form:PsiFromPhi0}
\end{gather}

Let $(R,\phi_2 ,\phi_0 ):\Aa  \rightarrow \Bb  $ be a lax
monoidal functor. It is well-known that $R$ induces a functor $\overline{R}:=%
{\Alg }(R):{\Alg }({\Aa  })\rightarrow {\Alg }({%
\Bb  })$ such that the diagram on the right-hand side in \eqref{diag:bar} commutes (cf. \cite[Proposition 6.1, page 52]%
{Benabou-IntrodBicat}; see also \cite[Proposition 3.29]{Aguiar-Mahajan}).
Explicitly,%
\begin{equation*}
\overline{R}\left( A,m,u\right) =\left( RA,\xymatrix{RA\otimes RA\ar[r]^-{\phi
_{2}\left( A,A\right) }&R\left( A\otimes A\right)\ar[r]^-{%
Rm}&RA},\xymatrix{\unit_{{\Bb}}\ar[r]^-{\phi _{0}}&R\unit_{{\Aa}}\ar[r]^-{Ru}&
RA}\right).
\end{equation*}%
Dually, a colax monoidal functor $(L,\psi _{2},\psi _{0}):\Bb  %
\rightarrow \Aa  $ colifts to a functor $\underline{L}:={\Coalg %
}(L):{\Coalg }({\Bb  })\rightarrow {\Coalg }({\Aa  %
})$ such that the diagram on the left-hand side in (\ref{diag:bar})
commutes. Explicitly,%
\begin{equation*}
\underline{L}\left( C,\Delta ,\varepsilon \right) =\left( LC,\xymatrix{LC\ar[r]^-{%
L\Delta }&L\left( C\otimes C\right) \ar[r]^-{\psi
_{2}\left( C,C\right) }&LC\otimes LC},\xymatrix{LC\ar[r]^-{%
L\varepsilon }&L\unit_{{\Bb}}\ar[r]^-{\psi _{0}%
}&\unit_{{\Aa}}}\right).
\end{equation*}%
The vertical arrows in the two diagrams below are the obvious forgetful
functors.

\begin{equation}
\begin{array}{ccc}
\xymatrix{{\Coalg}({\Bb  })\ar[d]_ {\mho '=\mho _{\Bb}
}\ar[rr]^-{\underline{L}=\Coalg(L)} && {{\Coalg}({\Aa  })}\ar[d]^{\mho =\mho
_{{\Aa  }}} \\ {\Bb  } \ar[rr]^-{L} && {\Aa} } &  &
\end{array}%
\qquad
\begin{array}{ccc}
\xymatrix{{\Alg}({\Aa  })\ar[d]_ {\Omega =\Omega
_{{\Aa  }}}\ar[rr]^-{\overline{R}=\Alg(R)} &&
{{\Alg}({\Bb  })}\ar[d]^{\Omega ' =\Omega _{{\Bb  }}} \\
{\Aa  } \ar[rr]^-{R} && {\Bb} } &  &
\end{array}
\label{diag:bar}
\end{equation}

\begin{definition}[{\protect\cite[Definition 2.3]{GV-OnTheDuality}}]
\label{def:liftable}Suppose $\Aa$ and $\Bb$  are monoidal
categories and $R:\Aa\rightarrow \Bb$ is a lax
monoidal functor with a left adjoint $L$. The pair $(L,R)$ is called \textit{liftable} if the induced functor $\overline{R}=\Alg(R):\Alg(\Aa)\to \Alg(\Bb)$ has a left adjoint\footnote{in general the left adjoint of $\overline{R}$ is not assumed to be of form $\Alg(L)$. In fact we can't even consider $\Alg(L)$ as $L$ needs not to be lax monoidal. A similar observation holds for the right adjoint of \underline{L}.}, say $\overline{L}$, and the induced functor $\underline{L}=\Coalg(L):\Coalg(\Bb)\to \Coalg(\Aa)$ has a right adjoint, say $\underline{R}$.
\end{definition}

Notice that being liftable really \textit{is} a condition: there exist
examples of lax monoidal functors $R$ between monoidal categories that have
a left adjoint $L$, but for which $\overline{R}$ does not have a left
adjoint. For instance, let $k$ be a field and set $S:=\frac{k \left[ X\right]%
}{\left( X^{2}\right) }$. Consider the functor
\begin{equation*}
R^{f}:\Vec ^{\text{f}}\rightarrow \Vec ^{\text{f}};V\mapsto S{%
\otimes}_{k} V.
\end{equation*}
In \cite[Example 4.2]{AGM}, it is shown that $\overline{R^{f}}$ has no left
adjoint.\medskip

\textbf{Liftability for braided monoidal categories.}
 Recall that when a monoidal category is \textit{braided}, its algebras and coalgebras inherit the monoidal structure, see e.g. \cite[1.2.2]{Aguiar-Mahajan}.
Let $\Aa  $ and $\Bb  $ now be \textit{%
braided} monoidal categories and let $R:\Aa  \rightarrow \Bb  $
be a braided lax monoidal functor having a left adjoint $L$. By e.g. \cite[%
Proposition 3.80]{Aguiar-Mahajan}, the functor $\overline{R}$ is lax
monoidal too. Explicitly, the lax monoidal functors $(R,\phi _{2},\phi _{0})$
and $(\overline{R},\overline{\phi }_{2},\overline{\phi }_{0})$ are connected
by the following equalities, for every $\overline{A}=\left(
A,m_{A},u_{A}\right) ,\overline{B}=\left( B,m_{B},u_{B}\right) \in {\mathsf{%
Alg}}({\Aa  })$
\begin{equation*}
\Omega _{{\Bb}}\circ \overline{R}=R\circ \Omega _{\mathcal{%
A}} ,\qquad \Omega _{{\Bb}} (\overline{\phi }%
_{2}\left( \overline{A},\overline{B}\right)) =\phi _{2}\left( A,B\right)
,\qquad  \Omega _{{\Bb}}(\overline{\phi }_{0})=\phi _{0}.
\label{form:phiOver}
\end{equation*}%
Note that $R$ is a braided lax monoidal functor if and only if $L$ is a braided colax monoidal functor, see e.g. \cite[Proposition 3.85]%
{Aguiar-Mahajan}. Moreover, if   $L$ is a braided colax monoidal functor one shows in a similar fashion as above that $%
\underline{L}$ is colax monoidal. The colax monoidal functors $(L,\psi
_{2},\psi _{0})$ and $(\underline{L},\underline{\psi }_{2},\underline{\psi }%
_{0})$ are connected by the following equalities for every $\underline{C}%
=\left( C,\Delta _{C},\varepsilon _{C}\right) ,\underline{D}=\left( D,\Delta
_{D},\varepsilon _{D}\right) \in {\Coalg }({\Bb  })$
\begin{equation*}
\mho _{\Aa}\circ \underline{L}=L\circ \mho _{{\Bb}%
} ,\qquad  \mho _{\Aa}( \underline{\psi }%
_{2}\left( \underline{C},\underline{D}\right) )=\psi _{2}\left( C,D\right)
,\qquad \mho _{\Aa}(\underline{\psi }_{0})=\psi _{0}.
\label{form:psiUnder}
\end{equation*}
As \cite[Example 4.2]{AGM} shows, a pair $(L,R)$, where $R:\Aa  %
\rightarrow \Bb  $ is a (braided) lax monoidal functor between
(braided) monoidal categories $\Aa  $ and $\Bb  $, having a left
adjoint $L$, needs not to be liftable, \textit{a priori}. But, in case $%
\Aa  $ and $\Bb  $ are braided monoidal categories and $R:%
\Aa  \rightarrow \Bb  $ is a braided lax monoidal functor having
a left adjoint $L$ such that the pair $(L,R)$ \textit{is} liftable, then, by
\cite[Lemma 2.4 and Theorem 2.7]{GV-OnTheDuality}, there is an adjunction $%
\left( \overline{\underline{L}},\overline{\underline{R}}\right) $ that fits
into the following commutative diagrams (and explains the choice of the
-perhaps somewhat fuzzy- term ``liftable'')
\begin{equation*}
\xymatrix{{\Bialg}({\Bb  })\ar[d]_
{\overline{\mho^{\prime}}}\ar[rr]^-{\overline{\underline{L}}=\Coalg(\overline{L})} &&
{{\Bialg}({\Aa  })}\ar[d]^{\overline{\mho '}} \\ {\Alg(\Bb  })
\ar[rr]^-{\overline{L}} && {\Alg(\Aa}) } \qquad \xymatrix{{\Bialg}({%
\Aa  })\ar[d]_ {\underline{\Omega}}\ar[rr]^-{\underline{\overline{R}}=\Alg(\underline{R})}
&& {{\Bialg}({\Bb  })}\ar[d]^{\underline{\Omega '}} \\
{\Coalg(\Aa  )} \ar[rr]^-{\underline{R}} && {\Coalg(\Bb}) }
\label{diag:bibar}
\end{equation*}
In this diagram, all vertical arrows are forgetful functors. 


\subsection{An approach to a result by Tambara, inspired by Dubuc}\label{tamDub}

In this subsection, we provide sufficient conditions (Theorem \ref{teo:Tambara} together with Proposition \ref{pro:Tambara}) for $%
\overline{R}$ to have a left adjoint. Under relatively mild assumptions,
this is obtained by considering a result by Dubuc \cite{Dubuc-Adjoint, Dubuc-KanExt} and by using it to provide a result in the spirit of
Tambara's \cite[Remark 1.5]{Tambara}, cf. \cite[Theorem 2.2.8]{AEM} for an unpublished proof of this result (Tambara does
not provide his own proof).
More precisely, let us compare the following two diagrams.

\begin{equation*}
\xymatrixrowsep{25pt}\xymatrix{\Alg({\Aa})\ar[rr]^-{\id_{\Alg(\Aa)}}
\ar@<.5ex>[d]^{\overline{R}} &&
{\Alg(\Aa  )}\ar@<.5ex>[d]^-{R{\Omega}} \\
{\Alg(\Bb  )}\ar[rr]_-{\Omega '}&&\Bb  \ar@<.5ex>[u]^{TL}}
\qquad \xymatrixrowsep{25pt}\xymatrixcolsep{55pt} \xymatrix{{\Aa}\ar[r]^{\id
_{\Aa}}\ar@<.5ex>[d]^{K} & \Aa \ar@<.5ex>[d]^{R}\\
{_{Q}\Bb  }\ar[r]_{U}&{\Bb}\ar@<.5ex>[u]^{L}}
\end{equation*}

\begin{itemize}
\item \textbf{The diagram on the left-hand side}. Here $\Aa  $ and $%
\Bb  $ are monoidal categories and it is assumed that the forgetful functor $%
\Omega :{\Alg }({\Aa  })\rightarrow {\Aa  }$ has a left
adjoint $T$. Also, $R:\Aa  \rightarrow \Bb  $ is supposed to be
a lax monoidal functor having a left adjoint $L$. \newline
If we are moreover given that $\Aa  $ has colimits and the tensor
product commutes with them, \cite[Remark 1.5]{Tambara} states that $%
\overline{R}$ has a left adjoint, too.

\item \textbf{The diagram on the right-hand side}. Here we are in the
setting of \cite[Theorem 1]{Dubuc-Adjoint} (see also \cite[Theorem A.1]%
{Dubuc-KanExt}) where, in case the category $\Aa  $ has reflexive coequalizers, the functor $K$ has a left adjoint {%
($U$ denotes the forgetful functor from the Eilenberg-Moore
category of algebras over the monad $Q$ on $\Bb  $ while $K$ is the
functor having a derivable adjoint triangle)}.
\end{itemize}

Thus, although the forgetful $\Omega ^{\prime }:{\Alg }%
({\Bb  })\rightarrow \Bb  $ (as above) is neither right adjoint\footnote{it does if ${\Bb  }$ has denumerable coproducts and they are
preserved by the tensor products, cf. \cite[Theorem 2, page 172]{MacLane}.} nor, equivalently, monadic (cf. \cite[Theorem A.6]{AM-MonadSym}), it
is still possible to produce a left adjoint for $\overline{R}$ like in the
diagram on the right-hand side, where instead $U$ is both right adjoint and
monadic. Moreover, on the right-hand side, just the existence of
reflexive coequalizers is required not of all colimits.

Inspired by Dubuc's work, we now present a result in the spirit of
Tambara's.
Note that there is no requirement on $\Omega $ here (no monadicity, neither
left adjoint). As a particular case we do, however, require that $\Aa  $ has all
coequalizers (not just reflexive coequalizers).

\begin{proposition}
\label{pro:TambaraGen}Consider the following diagram%
\begin{equation}
\xymatrix{{\Aa}\ar[r]^{\id _{\Aa}}\ar@<.5ex>[d]^{K} & \Aa
\ar@<.5ex>[d]^{R}\\
{\Alg(\Bb)}\ar@<.5ex>[u]^{\Lambda}\ar[r]_{\Omega}&{\Bb}\ar@<.5ex>[u]^{L}}
\label{diag:TambaraGen}
\end{equation}%
where $\Bb  $ is a monoidal category, $\Omega K=R$ and $\left(
L,R\right) $ is an adjunction with unit $\eta $ and counit $\epsilon $.
Given any algebra $\overline{B}:=\left( B,m_{B},u_{B}\right) $ in $\mathcal{B%
},$ write $KLB$ in the form $\left( RLB,m_{RLB},u_{RLB}\right)$ and assume
that the diagram
\begin{equation}
\xymatrixcolsep{2cm}\xymatrix{ L(B\ot B) \ar@<.5ex>[rr]^-{Lm_{B}}
\ar@<-.5ex>[rr]_-{{\epsilon}_{LB}\circ Lm_{RLB}\circ L\left( {\eta}_{
B}\otimes {\eta}_{ B}\right)}&& LB && L{\bf 1} \ar@<-.5ex>[ll]_-{Lu_{B}}
\ar@<.5ex>[ll]^-{{\epsilon}_{LB}\circ Lu_{RLB} }}  \label{diag:AlgDubuc}
\end{equation}%
has a colimit $\left( \Lambda \overline{B},\kappa _{\overline{B}%
}:LB\rightarrow \Lambda \overline{B}\right) $ (e.g. the category $\Aa
$ has coequalizers). This yields a functor $\Lambda$ which is a left adjoint
of the functor $K$.
Moreover the morphisms $\kappa_{\overline{B}}$ define a natural transformation $\kappa:L\Omega\to\Lambda$ such that $\kappa ={\epsilon }{\Lambda }\circ L\Omega {%
\overline{\eta }}$, where $\overline{\eta }$ denotes the unit of $(\Lambda,K)$.
\end{proposition}

\begin{proof}
Let $\overline{B}:=\left( B,m_{B},u_{B}\right) $ be an algebra in $\mathcal{B%
}$. From $\Omega K=R$, we can write $KLB$ in the form $\left(
RLB,m_{RLB},u_{RLB}\right) .$ By hypothesis, the diagram (\ref{diag:AlgDubuc}%
) has a colimit $\left( \Lambda \overline{B},\kappa _{\overline{B}%
}:LB\rightarrow \Lambda \overline{B}\right) $ (in case $\Aa  $ has
coequalizers, it is obtained by taking the coequalizer $\left( \Lambda
^{\prime }\overline{B},\kappa _{\overline{B}}^{\prime }:LB\rightarrow
\Lambda ^{\prime }\overline{B}\right) $ of the left-hand side pair and then
computing the coequalizer of the pair $\left( \kappa _{\overline{B}}^{\prime
}\circ Lu_{B},\kappa _{\overline{B}}^{\prime }\circ {\epsilon }_{LB}\circ
Lu_{RLB}\right) $). Let $f:\overline{B}\rightarrow \overline{E}$ be a
morphism in ${\Alg }({\Bb  })$. Then
\begin{eqnarray*}
L\Omega f\circ \epsilon _{LB}\circ Lm_{RLB}\circ L\left( {\eta }_{B}\otimes {%
\eta }_{B}\right) &=&{\epsilon }_{LE}\circ LRL\Omega f\circ Lm_{RLB}\circ
L\left( {\eta }_{B}\otimes {\eta }_{B}\right) \\
&=&{\epsilon }_{LE}\circ L\left[ \Omega KL\Omega f\circ m_{RLB}\circ \left( {%
\eta }_{B}\otimes {\eta }_{B}\right) \right] \\
&=&{\epsilon }_{LE}\circ L\left[ m_{RLE}\circ \left( RL\Omega f\otimes
RL\Omega f\right) \circ \left( {\eta }_{B}\otimes {\eta }_{B}\right) \right]
\\
&=&{\epsilon }_{LE}\circ Lm_{RLE}\circ L\left( {\eta }_{E}\otimes {\eta }%
_{E}\right) \circ L\left( \Omega f\otimes \Omega f\right) , \\
L\Omega f\circ Lm_{B} &=&L\left( \Omega f\circ m_{B}\right) =L\left(
m_{E}\circ \left( \Omega f\otimes \Omega f\right) \right) =Lm_{E}\circ
L\left( \Omega f\otimes \Omega f\right) , \\
L\Omega f\circ {\epsilon }_{LB}\circ Lu_{RLB} &=&{\epsilon }_{LE}\circ
LRL\Omega f\circ Lu_{RLB}={\epsilon }_{LE}\circ L\left( \Omega KL\Omega
f\circ u_{RLB}\right) ={\epsilon }_{LE}\circ Lu_{RLE}, \\
L\Omega f\circ Lu_{B} &=&L\left( \Omega f\circ u_{B}\right) =Lu_{E}
\end{eqnarray*}%
so that the following diagram serially commutes.%
\begin{equation*}
\xymatrixcolsep{2cm}\xymatrix{ L(B\ot B)\ar[d]_{L\left( \Omega f\otimes
\Omega f\right)} \ar@<.5ex>[rr]^-{Lm_{B}}
\ar@<-.5ex>[rr]_-{{\epsilon}_{LB}\circ Lm_{RLB}\circ L\left( {\eta}_{
B}\otimes {\eta}_{ B}\right)}&& LB \ar[d]_{L\left( \Omega f\right)} && L{\bf
1}\ar[d]_{\Id_{L\unit }} \ar@<-.5ex>[ll]_-{Lu_{B}}
\ar@<.5ex>[ll]^-{{\epsilon}_{LB}\circ Lu_{RLB} }\\ L(E\ot E)
\ar@<.5ex>[rr]^-{Lm_{E}} \ar@<-.5ex>[rr]_-{{\epsilon}_{LE}\circ
Lm_{RLE}\circ L\left( {\eta}_{ E}\otimes {\eta}_{ E}\right)}&& LE && L{\bf
1} \ar@<-.5ex>[ll]_-{Lu_{E}} \ar@<.5ex>[ll]^-{{\epsilon}_{LE}\circ Lu_{RLE}
} }.
\end{equation*}%
As a consequence, there is a unique morphism $\Lambda f:\Lambda \overline{B}%
\rightarrow \Lambda \overline{E}$ such that
\begin{equation}
\Lambda f\circ \kappa _{\overline{B}}=\kappa _{\overline{E}}\circ L\Omega f.
\label{form:Lambdaf}
\end{equation}%

Since $\kappa _{\overline{B}}$ is an epimorphism, one easily checks that $%
\Lambda \left( f\circ f^{\prime }\right) =\Lambda f\circ \Lambda f^{\prime }$
for all morphisms $f,f^{\prime }$ in ${\Alg }({\Bb  })$ that
can be composed. We thus get a functor $\Lambda :{\Alg }({\Bb  }%
)\rightarrow \Aa  $ and \eqref{form:Lambdaf} means that $\kappa_{-}$ is
natural in the lower argument. Let us check that $\left( \Lambda ,K\right) $ is
an adjunction. For every $A\in \Aa  $, consider the diagram (\ref%
{diag:AlgDubuc}) for $\overline{B}=KA$ (hence $B=RA$) i.e.

\begin{equation*}
\xymatrixcolsep{2cm}\xymatrix{ L(RA\ot RA) \ar@<.5ex>[rr]^-{Lm_{RA}}
\ar@<-.5ex>[rr]_-{{\epsilon}_{LRA}\circ Lm_{RLRA}\circ L\left( {\eta}_{
RA}\otimes {\eta}_{ RA}\right)}&& LRA && L{\bf 1} \ar@<-.5ex>[ll]_-{Lu_{RA}}
\ar@<.5ex>[ll]^-{{\epsilon}_{LRA}\circ Lu_{RLRA} }}.
\end{equation*}%
Then it is easily verified that ${\epsilon }_{A}\circ \left( {\epsilon }%
_{LRA}\circ Lm_{RLRA}\circ L\left( {\eta }_{RA}\otimes {\eta }_{RA}\right)
\right) ={\epsilon }_{A}\circ Lm_{RA}$ and that
${\epsilon }_{A}\circ \left( {\epsilon }_{LRA}\circ Lu_{RLB}\right)
={\epsilon }_{A}\circ Lu_{RA}$, so there exists a unique morphism ${%
\overline{\epsilon }}_{A}:\Lambda KA\rightarrow A$ such that
\begin{equation*}
{\overline{\epsilon }}_{A}\circ \kappa _{KA}={\epsilon }_{A}.
\end{equation*}%
If $h:A\rightarrow A^{\prime }$ is a morphism in $\Aa  $, we have
\begin{equation*}
{\overline{\epsilon }}_{A^{\prime }}\circ \Lambda Kh\circ \kappa _{KA}%
\overset{(\ref{form:Lambdaf})}{=}{\overline{\epsilon }}_{A^{\prime }}\circ
\kappa _{KA^{\prime }}\circ L\Omega Kh={\epsilon }_{A^{\prime }}\circ
LRh=h\circ {\epsilon }_{A}=h\circ \overline{{\epsilon }}_{A}\circ \kappa
_{KA}
\end{equation*}%
and hence ${\overline{\epsilon }}_{A^{\prime }}\circ \Lambda Kh=h\circ
\overline{{\epsilon }}_{A}$ which means that $\overline{\epsilon }_{-}$ is
natural in the lower argument. We have
\begin{eqnarray*}
R\kappa _{\overline{B}}\circ {\eta }_{B}\circ m_{B} &=&R\left( \kappa _{%
\overline{B}}\circ Lm_{B}\right) \circ {\eta }_{\left( B\otimes B\right) }=R%
\left[ \kappa _{\overline{B}}\circ {\epsilon }_{LB}\circ Lm_{RLB}\circ
L\left( {\eta }_{B}\otimes {\eta }_{B}\right) \right] \circ {\eta }_{\left(
B\otimes B\right) } \\
&=&R\left[ {\epsilon }_{\Lambda }\overline{B}\circ LR\kappa _{\overline{B}%
}\circ Lm_{RLB}\circ L\left( {\eta }_{B}\otimes {\eta }_{B}\right) \right]
\circ {\eta }_{\left( B\otimes B\right) } \\
&=&R{\epsilon }_{\Lambda \overline{B}}\circ {\eta }_{R\Lambda \overline{B}%
}\circ R\kappa _{\overline{B}}\circ m_{RLB}\circ \left( {\eta }_{B}\otimes {%
\eta }_{B}\right) \\
&=&R\kappa _{\overline{B}}\circ m_{RLB}\circ \left( {\eta }_{B}\otimes {\eta
}_{B}\right) \\
&=&m_{R\Lambda \overline{B}}\circ \left( R\kappa _{\overline{B}}\otimes R{%
\pi }_{\overline{B}}\right) \circ \left( {\eta }_{B}\otimes {\eta }%
_{B}\right) , \\
R\kappa _{\overline{B}}\circ {\eta }_{B}\circ u_{B} &=&R\left( \kappa _{%
\overline{B}}\circ RLu_{B}\right) \circ {\eta }_{\unit }=R\left( \kappa
_{\overline{B}}\circ {\epsilon }_{LB}\circ Lu_{RLB}\right) \circ {\eta }_{%
\unit } \\
&=&R\kappa _{\overline{B}}\circ R{\epsilon }_{LB}\circ {\eta }_{RLB}\circ
u_{RLB}=\Omega K\kappa _{\overline{B}}\circ u_{RLB}=u_{R\Lambda \overline{B}}
\end{eqnarray*}%
so that $R\kappa _{\overline{B}}\circ {\eta }_{B}$ induces an algebra map ${%
\overline{\eta }}_{\overline{B}}:\overline{B}\rightarrow K\Lambda \overline{B%
}$ such that%
\begin{equation*}
\Omega {\overline{\eta }}_{\overline{B}}=R\kappa _{\overline{B}}\circ {\eta }%
_{B}.
\end{equation*}%
Given a morphism $f:\overline{B}\rightarrow \overline{E}$ in ${\Alg }(%
{\Bb  })$, we get%
\begin{eqnarray*}
\Omega \left( {\overline{\eta }}_{\overline{E}}\circ f\right) &=&R\kappa _{%
\overline{E}}\circ {\eta }_{E}\circ \Omega f=R\left( \kappa _{\overline{E}%
}\circ L\Omega f\right) \circ {\eta }_{B} \\
&\overset{(\ref{form:Lambdaf})}{=}&R\left( \Lambda f\circ \kappa _{\overline{%
B}}\right) \circ {\eta }_{B}=R\Lambda f\circ \Omega {\overline{\eta }}_{%
\overline{B}}=\Omega \left( K\Lambda f\circ {\overline{\eta }}_{\overline{B}%
}\right)
\end{eqnarray*}%
and hence ${\overline{\eta }}_{\overline{E}}\circ f=K\Lambda f\circ {%
\overline{\eta }}_{\overline{B}}$ which means that $\overline{\eta }_{-}$ is
natural in the lower argument. We have that%
\begin{equation*}
{\overline{\epsilon }}_{\Lambda \overline{B}}\circ \Lambda {\overline{\eta }}%
_{\overline{B}}\circ \kappa _{\overline{B}}\overset{(\ref{form:Lambdaf})}{=}{%
\overline{\epsilon }}_{\Lambda \overline{B}}\circ \kappa _{K\Lambda
\overline{B}}\circ L\Omega {\overline{\eta }}_{\overline{B}}\newline
={\epsilon }_{\Lambda \overline{B}}\circ L\left( R\kappa _{\overline{B}%
}\circ {\eta }_{B}\right) =\kappa _{\overline{B}}\circ {\epsilon }_{LB}\circ
L{\eta }_{B}=\kappa _{\overline{B}},
\end{equation*}%
so that ${\overline{\epsilon }}_{\Lambda \overline{B}}\circ \Lambda {%
\overline{\eta }}_{\overline{B}}={\mathsf{Id}}_{\Lambda \overline{B}}$.
Moreover,
\begin{equation*}
\Omega \left( K{\overline{\epsilon }}_{A}\circ {\overline{\eta }}%
_{KA}\right) =R{\overline{\epsilon }}_{A}\circ \Omega {\overline{\eta }}%
_{KA}=R{\overline{\epsilon }}_{A}\circ R\kappa _{KA}\circ {\eta }_{RA}=R{%
\epsilon }_{A}\circ {\eta }_{RA}={\mathsf{Id}}_{RA}=\Omega {\mathsf{Id}}%
_{KA}.
\end{equation*}%
Since $\Omega $ is faithful, we get that $K{\overline{\epsilon }}_{A}\circ {%
\overline{\eta }}_{KA}={\mathsf{Id}}_{KA}.$ Thus $(\Lambda,K)$ is an adjunction. We compute%
\begin{equation*}
{\epsilon }_{\Lambda \overline{B}}\circ L\Omega {\overline{\eta }}_{%
\overline{B}}={\epsilon }_{\Lambda \overline{B}}\circ LR\kappa _{\overline{B}%
}\circ L{\eta }_{B}=\kappa _{\overline{B}}\circ {\epsilon }_{LB}\circ L{\eta
}_{B}=\kappa _{\overline{B}}.\qedhere
\end{equation*}%
\end{proof}

\begin{remark}
We already observed that, if $R:\Aa  \rightarrow \Bb  $ is a lax
monoidal functor having a left adjoint and if $\Aa  $ has colimits and
the tensor product commutes with them, then $\overline{R}$ is a right
adjoint too. It can be shown that 
the pair%
\begin{equation*}
\xymatrixcolsep{2cm}\xymatrix{ L(B\ot B) \ar@<.5ex>[rr]^-{Lm_{B}}
\ar@<-.5ex>[rr]_-{{\epsilon}_{{LB}}\circ Lm_{RLB}\circ L\left( {\eta}_{
B}\otimes {\eta}_{ B}\right)} &&LB}
\end{equation*}
is reflexive if we assume that ${\eta}_{ \unit }:\unit \rightarrow RL%
\unit $ is multiplicative.
\end{remark}

Let us fix the following setting we will frequently work in.

\begin{notations}
\label{not:setting}Let $%
\Aa  $ and $\Bb  $ be monoidal categories and let $R:{\Aa  %
}\rightarrow \Bb  $ be a lax monoidal functor with a left adjoint $L,$
unit $\eta :\id_{\Bb  }\rightarrow RL$ and counit $\epsilon :LR\rightarrow \id_{\Aa  }$. Assume that the forgetful functor $\Omega :{%
\Alg }({\Aa  })\rightarrow {\Aa  }$ has a left adjoint $%
T, $ with unit $\alpha :\mathrm{id}_{\Aa  }\rightarrow \Omega T$ and counit $\gamma :T\Omega
\rightarrow \id_{\Alg (\Aa  )} $.

Given an algebra $\overline{B}=\left( B,m_{B},u_{B}\right) $ in $\Bb
$, write
\begin{equation}
\left( R\Omega TLB,m_{R\Omega TLB},u_{R\Omega TLB}\right) =\overline{R}TLB,
\label{eq:newlabel}
\end{equation}%
and set $\mu :=\gamma _{TLB}\circ T\epsilon _{\Omega TLB}\circ TLm_{R\Omega
TLB}\circ TL\left( R\alpha _{LB}\circ \eta _{B}\otimes R\alpha _{LB}\circ
\eta _{B}\right)$. Consider the diagram
\begin{equation}
\xymatrixcolsep{2cm}\xymatrix{ TL(B\ot B) \ar@<.5ex>[rr]^-{TLm_{B}}
\ar@<-.5ex>[rr]_-{\mu}&& TLB && TL{\bf 1} \ar@<-.5ex>[ll]_-{TLu_{B}}
\ar@<.5ex>[ll]^-{\gamma _{TLB}\circ T\epsilon _{\Omega TLB}\circ
TLu_{R\Omega TLB}}} .  \label{eq:14bis}
\end{equation}
\end{notations}

The following result provides a sufficient condition for
$\overline{R}=\Alg(R)$ to have a left adjoint for a lax monoidal functor $R$  with a left adjoint.

\begin{theorem}
\label{teo:Tambara} In the Setting \ref{not:setting}, assume
that for every algebra $\overline{B}$ in $\Bb  $ the diagram %
\eqref{eq:14bis} has a colimit $\left( \overline{L}\,\overline{B},\kappa _{%
\overline{B}}:TLB\rightarrow \overline{L}\,\overline{B}\right) $ (e.g. the
category ${\Alg }({\Aa  })$ has coequalizers). This yields a
functor $\overline{L}$ which is a left adjoint of the functor $\overline{R}=\Alg(R):{%
\Alg }({\Aa  })\rightarrow {\Alg }({\Bb  })$ and the morphisms $\kappa_{\overline{B}}$ define a natural transformation $\kappa:TL\Omega\to \overline{L}$.
\end{theorem}

\begin{proof}
Let $\left( L,R,\eta ,\epsilon\right) $ and $\left( T,\Omega
,\alpha ,\gamma \right) $ be the
adjunctions as in Setting \ref{not:setting}. Their composition yields the adjunction
\begin{equation*}
\left( TL,R\Omega ,\mathrm{id}_{\Bb  }\overset{\eta }{\longrightarrow }%
RL\overset{R\alpha L}{\longrightarrow }R\Omega TL,TLR\Omega \overset{%
T\epsilon \Omega }{\longrightarrow }T\Omega \overset{\gamma }{%
\longrightarrow }\mathrm{id}_{\Alg (\Aa  )}\right) .
\end{equation*}%
Then diagram (\ref{diag:AlgDubuc}) becomes $\left( \ref{eq:14bis}\right)$ in
our setting as the role of diagram \eqref{diag:TambaraGen} is played by the
following diagram%
\begin{equation}
\xymatrix{\Alg({\Aa})\ar[rr]^-{\id} \ar@<.5ex>[d]^{\overline{R}} &&
{\Alg(\Aa  )}\ar@<.5ex>[d]^-{R{\Omega}} \\
{\Alg(\Bb  )}\ar[rr]_-{\Omega '}&&\Bb  \ar@<.5ex>[u]^{TL} }
\label{diag:Tambara-1}
\end{equation}%
where $\Omega ^{\prime }\overline{R}=R\Omega .$ The conclusion follows by
Proposition \ref{pro:TambaraGen}.
\end{proof}

The next result collects sufficient conditions for Theorem \ref{teo:Tambara} to be applied.

\begin{proposition}\label{pro:Tambara}
For a monoidal category $\Aa$, assume that ${\Alg}(\Aa)$ is complete, well-powered and it has a cogenerating family. Then the category ${\Alg}(\Aa)$ has coequalizers. Moreover, the forgetful functor $\Omega :{\Alg}(\Aa)\rightarrow \Aa$ has a left adjoint $T$.
\end{proposition}

\begin{proof}
By \cite[Proposition 3.3.8]{Borceux1}, the category ${\Alg}({\Aa  })$ is cocomplete. In particular, ${\Alg}({\Aa  })$ has coequalizers. Moreover, since $\Omega$ creates limits (cf. \cite[Proposition 2.5]{Pareigis-3}), it also preserves them so that by the special adjoint functor theorem (cf. \cite[Theorem 3.3.4]{Borceux1}) it has a left adjoint $T$.
\end{proof}

\begin{example}\label{exa:aureo}
Let $k$ be a commutative ring. Let $\Aa  =k\text{-}{\sf Mod}^\op$ be the opposite of the category $k\text{-}{\sf Mod}$.
Thus  ${\Alg}({\Aa  })=k\text{-}\Coalg^\op$ is the opposite of the category $k$-$\Coalg=\Coalg(k\text{-}{\sf Mod})$ of $k$-coalgebras and their morphisms.
By the proof of \cite[Theorem 4.1]{Ba}, the category ${\Alg}({\Aa  })$ is complete, well-powered and it has a cogenerating family.
Thus Proposition \ref{pro:Tambara} applies. As a consequence, by Theorem \ref{teo:Tambara}, for every lax monoidal functor $R:k\text{-}{\sf Mod}^\op\to \Bb$ with a left adjoint, the functor $\overline{R}=\Alg(R):k\text{-}\Coalg^\op\to \Alg(\Bb)$ has a left adjoint too.
\end{example}

In Theorem \ref{teo:Tambara}, the existence of a colimit for diagram \eqref{eq:14bis} plays a crucial role. In Section \ref{sec:crucial-initial} we will reinterpret this colimit as a suitable initial object obtaining an explicit description for $\overline{L}\,\overline{B}$, for every algebra $\overline{B}$ in $\Bb  $, under relevant assumptions. The aforementioned reinterpretation is based on the notions of relative weak coreflections and fibrations, which is our next topic of investigation.

\section{Relative weak coreflections and fibrations}\label{sec:relweakcoref}

In this section we consider the notion of weakly coreflective subcategory and a relative version of it as a tool to compare the initial objects in the involved categories, obtaining Corollary \ref{coro:constrinitial} and Proposition \ref{pro:initial}. Then we study pullbacks of a relative weakly coreflective subcategory along relative fibrations, see Proposition \ref{pro:pullbackweak}. These results will be used in Section \ref{sec:crucial-initial} in order to prove Proposition \ref{prop:initialepiobj}, Proposition \ref{prop:initialepi} and Proposition \ref{pro:constinepi}.

\subsection{Relative weak coreflections}
Consider a full subcategory $\Cc$ of a category $\Dd$. By a \emph{weak coreflection} of an object $D\in\Dd$ in $\Cc$ we mean a morphism $r_D:D^\star\to D$ in $\Dd$ with $D^\star\in\Cc$ and such that the function $\hom_\Dd(C,r_D):\hom_\Dd(C,D^\star)\to \hom_\Dd(C,D)$ is surjective for all $C\in\Cc$.
Given a class $\Ee$ of morphisms in $\Dd$, then $\Cc$ is said to be \emph{weakly $\Ee$-coreflective} if each object in $\Dd$ has a weak coreflection $r_D\in \Ee$. If $\Ee$ is the whole class of morphisms in $\Dd$ we will just say \emph{weakly coreflective}, see e.g. \cite[Definition 4.5]{AR}. Clearly, a weakly $\Ee$-coreflective subcategory is in particular weakly coreflective.

\begin{remark}
When $\Ee$ is the class of monomorphisms (resp. epimorphism) one could speak about weakly mono-coreflective (epi-coreflective), in analogy to the non-weak case, see e.g. \cite{HS}. Anyway, we will not deal with these cases.
\end{remark}

Consider a weakly coreflective subcategory $\Cc$ of a category $\Dd$ and let $J:\Cc\to\Dd$ be the canonical embedding.
Then the object function $(-)^\star:{\sf Obj}(\Dd)\to {\sf Obj}(\Cc),\,D\mapsto D^\star,$ yields a \emph{weak right adjoint} to the functor $J$, see \cite{Ma} (where it is called a right adjoint system). The item $1)$ in the following result, proved under the further assumption that the category $\Cc$ is posetal, is an analogue, for this particular weak right adjoint, of the well-known fact that a right adjoint preserves limits.

\begin{proposition}\label{prop:limweak}
Let $\Cc$ be a posetal weakly coreflexive full subcategory of a category  $\Dd$ and let $J:\Cc\to\Dd$ be the canonical embedding. Given a functor $F:\Aa\to \Cc$, the following assertions holds.
\begin{enumerate}
  \item[$1)$] If $(L,(l_A)_{A\in \Aa})$ is a limit of $JF$, then $(L^\star,(l_A\circ r_L)_{A\in \Aa})$ is a limit of $F$.
  \item[$2)$] If $(L,(l_A)_{A\in \Aa})$ is a colimit of $JF$, then $(L^\star,(l_A^\star)_{A\in \Aa})$ is a colimit of $F$, for a unique morphism $l_A^\star:FA\to L^\star$ in $\Cc$ such that $r_L\circ l_A^\star=l_A$, for every $A\in\Aa$.
\end{enumerate}
\end{proposition}

\begin{proof}
$1)$ Set $l_A^\star:=l_A\circ r_L:L^\star\to FA$ which is clearly a morphism in $\Cc$. Given a morphism $a:A\to A'$ in $\Aa$, we have that $Fa\circ l_A^\star=JFa\circ l_A\circ r_L=l_{A'}\circ r_L=l_{A'}^\star$ so that $(L^\star,(l_A^\star)_{A\in \Aa})$ is a cone on $F$. Given another cone $(C,(c_A)_{A\in \Aa})$ on $F$, it is in particular a cone on $JF$ so that, since $(L,(l_A)_{A\in \Aa})$ is a limit of $JF$, there is a unique morphism $c:C\to L$ in $\Dd$ such that $l_A\circ c=c_A$. Since $\hom_\Dd(C,r_L):\hom_\Dd(C,L^\star)\to \hom_\Dd(C,L)$ is surjective, there is $c':C\to L^\star$ in $\Dd$ such that $r_L\circ c'=c$. Thus $l_A^\star\circ c'=l_A\circ r_L\circ c'=l_A\circ c=c_A$. Finally, since $C$ and $L^\star$ are in $\Cc$, the morphism $c'$ is in fact a morphism in $\Cc$ and hence it is unique as $\Cc$ is posetal.

$2)$ Since $\hom_\Dd(FA,r_L):\hom_\Dd(FA,L^\star)\to \hom_\Dd(FA,L)$ is surjective, there is $l_A^\star:FA\to L^\star$ in $\Dd$ such that $r_L\circ l_A^\star=l_A$. Since $FA$ and $L^\star$ are in $\Cc$, the morphism $l_A^\star$ is in fact a morphism in $\Cc$ and hence it is unique as $\Cc$ is posetal. Clearly $(L^\star,(l_A^\star)_{A\in \Aa})$ is automatically a cocone on $F$ as $\Cc$ is posetal. Given another cocone $(C,(c_A)_{A\in \Aa})$ on $F$, it is in particular a cocone on $JF$ so that, since $(L,(l_A)_{A\in \Aa})$ is a colimit of $JF$, there is a unique morphism $c:L\to C$ in $\Dd$ such that $c\circ l_A=c_A$. Set $c^\star:=c\circ r_L:L^\star\to C$. This is a morphism in $\Cc$ whence it is unique as $\Cc$ is posetal. Moreover $c^\star\circ l_A^\star=c\circ r_L\circ l_A^\star=c\circ  l_A=c_A$.
\end{proof}

%

The following result will be useful in constructing explicitly an initial object in a posetal weakly coreflexive full subcategory.

\begin{corollary}\label{coro:constrinitial}
Let $\Cc$ be a posetal weakly coreflexive full subcategory of a category  $\Dd$. Assume there is a set $\Ss$ consisting of objects in $\Cc$ such that each object in $\Cc$ is isomorphic to an element of $\Ss$. If there exists the product $\prod_{S\in \Ss}S$ in $\Dd$, then $(\prod_{S\in \Ss} S)^\star$ is an initial object in $\Cc$.
\end{corollary}

\begin{proof}
Set $D:=\prod_{S\in \Ss}S\in\Dd$. By Proposition \ref{prop:limweak}, we have that $D^\star\in\Cc$ is the product of the elements of $\Ss$ in $\Cc$ so that we can consider the canonical projection $p_S:D^\star\to S$ in $\Cc$, for every $S\in\Ss$. Given $C\in \Cc$, there is $S\in\Ss$ and an isomorphism $f:S\to C$ in $\Cc$. Since $\Cc$ is posetal we get $\hom_\Cc(D^\star,C)=\{f\circ p_S\}$ and hence $D^\star$  is an initial object in $\Cc$.
\end{proof}

\begin{lemma}\label{lem:weakunique}
Let $\Cc$ be a posetal weakly coreflective full subcategory of a category $\Dd$. Then, for every $C\in\Cc$ and $D\in\Dd$, there can be a unique morphism $C\to D$ in $\Dd$.
\end{lemma}

\begin{proof}
Since $\Cc$ is weakly coreflective,  there is a morphism $r_D:D^\star\to D$ with $D^\star\in\Cc$ and such that the function $\hom_\Dd(C,r_D):\hom_\Dd(C,D^\star)\to \hom_\Dd(C,D)$ is surjective. Since $\Cc$ is a posetal full subcategory of a category $\Dd$, we have that $\hom_\Dd(C,D^\star)=\hom_\Cc(C,D^\star)$ has at most one element so that $\hom_\Dd(C,D)$  has at most one element.
\end{proof}

\begin{proposition}\label{pro:initial}
  Let $\Cc$ be a replete posetal weakly coreflective full subcategory of a category $\Dd$.
   Then $\Cc$ and $\Dd$ have the same initial objects, if any.
 \end{proposition}
\begin{proof}
Assume that $I$ is an initial object in $\Cc$ and let $D\in \Dd $. By Lemma \ref{lem:weakunique}, the set $\hom_\Dd(I,D)$ has at most one element. Since $\Cc$ is weakly coreflective,  there is a morphism $r_D:D^\star\to D$ with $D^\star\in\Cc$ and since $I$ is initial in $\Cc$ there is a morphism $i:I\to D^\star$ so that $r_D\circ i\in \hom_\Dd(I,D)$ and hence $\hom_\Dd(I,D)=\{r_D\circ i\}$ so that $I$ is initial in $\Dd$.

Conversely, assume $I$ is an initial object in $\Dd$. Since $I$ is a particular instance of colimit, by Proposition \ref{prop:limweak}, we have that $I^\star$ is an initial object in $\Cc$. By the foregoing, $I^\star$ is an initial object also in $\Dd$. By uniqueness we get $I\cong I^\star$ as objects in $\Dd$. Since $\Cc$ is replete and $I^\star\in\Cc$, we get that $I\in \Cc$. Thus $I$ is an initial object also in $\Cc$.
\end{proof}

Let $\Cc$ be a full subcategory of a category $\Dd$. We can consider the pullback of $\Cc$ along a functor $V:\Dd'\to\Dd$ i.e. the full subcategory $\Cc'$ of $\Dd'$ whose objects are the $D'\in\Dd'$ such that $VD'\in\Cc$. Clearly $V$ induces the functor $U:\Cc'\to\Cc,C'\mapsto VC',f'\mapsto Vf'$ which makes commute the diagram
\begin{equation*}
\xymatrixrowsep{.6cm}\xymatrix{ \Cc'\pulb\ar@{^(->}[d] \ar[r]^-{U} &\Cc\ar@{^(->}[d] \\\Dd' \ar[r]^-{V} &\Dd}
\end{equation*}
The instance of this situation we are interested in is the diagram in Remark \ref{rem:functors}.

\begin{lemma}\label{lem:pullback}Let $\Cc$ be a replete posetal full subcategory of a category  $\Dd$. Consider the pullback $\Cc'$ of $\Cc$ along a faithful functor $V:\Dd'\to\Dd$. Then $\Cc'$ is a replete posetal full subcategory of $\Dd'$.
\end{lemma}

\begin{proof}
Let $C'\in\Cc'$ and $D'\in\Dd'$. Given an isomorphism $h':C'\to D'$ in $\Dd'$, then we get an isomorphism $Vh':VC'\to VD'$ in $\Dd$. Since $VC'\in\Cc$, and $\Cc$ is replete, we get that $VD'\in\Cc$ and hence $D'\in \Cc'$. Thus $\Dd'$ is a replete full subcategory of $\Cc'$.

Given morphisms $f,g:C'\to C''$ in $\Cc'$, we get that $Vf,Vg:VC'\to VC''$ are morphisms in $\Cc$. Since $\Cc$ is posetal we get $Vf=Vg$. Since $V$ is faithful, we get $f=g$ and hence $\Cc'$ is posetal.
\end{proof}

\subsection{Relative fibrations}
Let $F:\mathcal{A}\rightarrow \mathcal{B}$ be a functor. Recall that a morphism $f\in \mathcal{A}$ is \emph{cartesian} (with
respect to $F$) over a morphism $f^{\prime }\in \mathcal{B}$ whenever $%
Ff=f^{\prime }$ and, when given $g\in \mathcal{A}$ and $h\in \mathcal{B}$ such
that $Ff\circ h=Fg,$ there exists a unique morphism $k\in \mathcal{A}$
such that $Fk=h$ and $f\circ k=g$.
\begin{equation*}
\xymatrixcolsep{1.5cm}\xymatrixrowsep{0.5cm}\xymatrix{&FZ\ar[dl]_h\ar[d]^{Fg}\\ FX\ar[r]|{Ff}&FY }\qquad\qquad
\xymatrixcolsep{1.5cm}\xymatrixrowsep{0.5cm}\xymatrix{&Z\ar@{.>}[dl]_k\ar[d]^{g}\\ X\ar[r]|{f}&Y }
\end{equation*}
Let $\Ee$ be a class of morphisms in $\mathcal{B}$. We say that $F$ is an $\Ee$-\emph{fibration} if every morphism $%
f^{\prime }:B\rightarrow FA$ in $\Ee$ there is $f:A^{\prime
}\rightarrow A\ $which is cartesian over $f^{\prime }$, see \cite[Definition 4.1]{AM-MonBrAdj}.

\begin{proposition}\label{pro:pullbackweak}Let $\Cc$ be a weakly $\Ee$-coreflexive full subcategory of a category  $\Dd$. Consider the pullback $\Cc'$ of $\Cc$ along an $\Ee$-fibration $V:\Dd'\to\Dd$. Then $\Cc'$ is a weakly coreflexive full subcategory of $\Dd'$.
\end{proposition}

\begin{proof}
Given $X\in\Dd'$, since $VX\in\Dd$, we can consider $r_{VX}:(VX)^\star\to VX$ in $\Ee$. Since $V$ is an $\Ee$-fibration, there is a morphism $r'_{X}:X^\star\to X$ in $\Dd'$ which is cartesian over $r_{VX}$. In particular $V(r'_{X})=r_{VX}$ so that $V(X^\star)=(VX)^\star$ and hence $X^\star\in\Cc'$. We have to check that $\hom_{\Dd'}(C',r'_X)$ is surjective for all $C'\in\Cc'$. 
\begin{equation*}
\xymatrixcolsep{2.5cm}\xymatrix{ \hom_{\Dd'}(C',X^\star)\ar[d] \ar[r]^-{\hom_{\Dd'}(C',r'_X)} &\hom_{\Dd'}(C',X)\ar[d] \\
\hom_{\Dd}(VC',V(X^\star)) \ar@{->>}[r]^-{\hom_{\Dd}(VC',V(r'_{X}))} &\hom_{\Dd}(VC',VX)}
\end{equation*}
Let $f\in\hom_{\Dd'}(C',X)$. Then $Vf\in \hom_{\Dd}(VC',VX)$. Since $\hom_{\Dd}(VC',r_{VX})$ is surjective, there is $g\in\hom_{\Dd}(VC',(VX)^\star)$ such that $r_{VX}\circ g=Vf$. Since $r'_{X}$ is cartesian over $r_{VX}$, there is a unique morphism $g'\in\hom_{\Dd'}(C',X^\star)$ such that $Vg'=g$ and $r'_{X}\circ g'=f$. Thus $\hom_{\Dd'}(C',r'_X)$ is surjective.
\end{proof}

\section{The crucial colimit as an initial object}\label{sec:crucial-initial}

In Theorem \ref{teo:Tambara} it is shown that, in the Setting \ref{not:setting}, the existence of a colimit for the diagram \eqref{eq:14bis}, for every algebra $\overline{B}$  in $\Bb  $, yields a functor $\overline{L}$ which is a left adjoint of the functor $\overline{R}=\Alg(R):{%
\Alg }({\Aa  })\rightarrow {\Alg }({\Bb  })$. The first aim of this section is to reinterpret this colimit as an initial object  in the category $\indalg(\overline{B}) $ of induced algebras of $\overline{B}$. This will lead us to rewrite Theorem \ref{teo:Tambara} as Theorem \ref{teo:redinv}. Then, under suitable assumptions, in several steps we will see in Theorem \ref{teo:red} that $\indalg(\overline{B}) $ can be replaced by three other categories that can be more easy to handle in practice, among them the category $\epindobj(\overline{B})$ of epi-induced objects. Then we will provide a construction of an initial object in $\epindobj(\overline{B})$. Putting together these results we will provide Proposition \ref{pro:brut} giving an explicit description for $\overline{L}\,\overline{B}$. By taking $\Cc^\op$ instead of $\Aa$ we will get Proposition \ref{pro:brutop} that will be used together with Remark \ref{rem:regop} in the Section \ref{applicationsection} for our main example.

\subsection{Induced objects and algebras}
Our aim here is to characterize a colimit for  \eqref{eq:14bis} as an initial object in the category $\indalg(\overline{B}) $ of induced algebras of $\overline{B}$.

%

\begin{definition}
Let $\left( L:\Bb  \rightarrow \Aa  ,\psi _{2},\psi _{0}\right) $
be a colax monoidal functor and let $\overline{B}:=\left(
B,m_{B},u_{B}\right) $ be an algebra in $\Bb  $.
We say that $\left( \overline{A},q\right) $ is an \textit{induced object of }%
$\overline{B}$ (by $L$) whenever $\overline{A}=\left( A,m_{A},u_{A}\right) $
consists of an object $A$ and morphisms $m_{A}:A\otimes A\rightarrow A,$ $%
u_{A}:\unit \rightarrow A$ and $q:LB\rightarrow A$ in $\Aa  $ such
that
\begin{equation*}\xymatrixcolsep{2cm}
\xymatrix{L(B\otimes B)\ar[d]^{\psi _{2}\left(
B,B\right)}\ar[rr]^{Lm_B}&&LB\ar[d]_q\\
LB\otimes LB\ar[r]^{q\otimes q}&A\otimes A\ar[r]^{m_A}&A
}\qquad
\xymatrix{L\unit \ar[d]^{\psi _{0}}\ar[r]^{Lu_B}&LB\ar[d]_q\\
\unit \ar[r]^{u_A}&A
}.
\end{equation*}
A \emph{morphism} $h:(\overline{A},q)\to (\overline{A'},q') $ of induced objects of $\overline{B}$ is a morphism $h:A\to A'$ such that $h\circ q=q'$. In this way we have defined the category $\indobj(\overline{B}) $ of induced objects of $\overline{B}$ and their morphisms.
Given $(\overline{A},q)$ in $\indobj(\overline{B})$,
 if the triple $\overline{A}=\left( A,m_{A},u_{A}\right) $ is an algebra in $%
\Aa  $ then $\left( \overline{A},q\right) $ is called an \textit{%
induced algebra of }$\overline{B}$ (by $L$). Note that $\left( \overline{A}%
,q\right) $ is an object in the comma category $\left( LB\downarrow \Omega
\right) ,$ see \cite[page 47]{MacLane}, where $\Omega :\Alg \left(
\Aa  \right) \rightarrow \Aa  $ is the forgetful functor. Thus we can define a \emph{morphism} $\overline{h}:(\overline{A},q)\to (\overline{A'},q') $ of induced algebras of $\overline{B}$ to be an algebra morphism $\overline{h}:\overline{A}\to \overline{A'}$ such that $\Omega\overline{h}\circ q=q'$. In this way we have defined the category $\indalg(\overline{B}) $ of induced algebras of $\overline{B}$ an their morphisms. %
\end{definition}

\begin{remark}
The two above notions of induced object and algebra
already appeared in \cite[Definition 11]{PS} with a slightly different
terminology.
\end{remark}

%

We now turn to the Setting \ref{not:setting}.
It is well-known that the colimit of a diagram $D$ is the initial object in the category formed by cocones on $D$.
\begin{invisible} See, e.g. Definition 5.16 in [Awodey, "Category Theory"]\end{invisible}
In particular, we get that a colimit for diagram \eqref{eq:14bis} is an initial object in the category $\cocone(\overline{B})$ whose objects are pairs $(\overline{A},\xi)$, where  $\xi :TLB\rightarrow \overline{A}$ is an algebra morphism that coequalizes the pairs in \eqref{eq:14bis} and whose morphisms $(\overline{A},\xi)\to (\overline{A'},\xi')$ are algebra morphisms $\overline{f}:\overline{A}\to \overline{A'}$ such that $\overline{f}\circ \xi=\xi'$.

 The next aim is to use the adjunction $(T,\Omega,\alpha:\id_\Aa\to\Omega T,\gamma :T\Omega
\rightarrow \id_{\Alg (\Aa  )})$ to  show that the category $\cocone(\overline{B})$ is isomorphic to the category $\indalg(\overline{B})$ so that the respective initial objects are in bijective correspondence.


\begin{proposition}
\label{pro:induced1} In the Setting \ref{not:setting}, let $%
\overline{B}:=\left( B,m_{B},u_{B}\right) $ be an algebra in $\Bb  .$
Then an algebra morphism $\xi :TLB\rightarrow \overline{A}$ coequalizes the
pairs in \eqref{eq:14bis} if and only if $\left( \overline{A}%
,\Omega \xi \circ \alpha _{LB}:LB\rightarrow A\right) $ is an induced
algebra of $\overline{B}$. As a consequence we get the category isomorphism
$$F:\cocone(\overline{B})\to \indalg(\overline{B}),\quad (\overline{A},\xi)\mapsto ( \overline{A}%
,\Omega \xi \circ \alpha _{LB}),\quad \overline{f}\mapsto \overline{f}$$
whose inverse $F^{-1}$ is given by $F^{-1}(\overline{A},q):=(\overline{A},\gamma_{\overline{A}}\circ Tq)$.
\end{proposition}

\begin{proof}
Recall that the morphisms $m_{R\Omega TLB}$ and $u_{R\Omega TLB}$ are
determined by the equality \eqref{eq:newlabel} so that
\begin{equation*}
m_{R\Omega TLB} = Rm_{\Omega TLB}\circ \phi _{2}\left( \Omega TLB,\Omega
TLB\right) ~\text{and}~ u_{R\Omega TLB}= Ru_{\Omega TLB}\circ \phi _{0}
\end{equation*}
where $TLB=\left( \Omega TLB,m_{\Omega TLB},u_{\Omega TLB}\right) .$ By
using this fact, we want to rewrite some of the morphisms in $\left( \ref%
{eq:14bis}\right) $. We have%
\begin{eqnarray*}
&&\gamma _{TLB}\circ T\epsilon _{\Omega TLB}\circ TLm_{R\Omega TLB}\circ
TL\left( R\alpha _{LB}\circ \eta _{B}\otimes R\alpha _{LB}\circ \eta
_{B}\right) \\
&=&\gamma _{TLB}\circ Tm_{\Omega TLB}\circ T\epsilon _{\Omega TLB\otimes
\Omega TLB}\circ TL\phi _{2}\left( \Omega TLB,\Omega TLB\right) \circ
TL\left( R\alpha _{LB}\otimes R\alpha _{LB}\right) \circ TL\left( \eta
_{B}\otimes \eta _{B}\right) \\
&=&\gamma _{TLB}\circ Tm_{\Omega TLB}\circ T\left( \alpha _{LB}\otimes
\alpha _{LB}\right) \circ T\epsilon _{LB\otimes LB}\circ TL\phi _{2}\left(
LB,LB\right) \circ TL\left( \eta _{B}\otimes \eta _{B}\right)
\end{eqnarray*}%
and%
\begin{eqnarray*}
\gamma _{TLB}\circ T\epsilon _{\Omega TLB}\circ TLu_{R\Omega TLB} &=&\gamma
_{TLB}\circ T\epsilon _{\Omega TLB}\circ TLRu_{\Omega TLB}\circ TL\phi _{0}
\\
&=&\gamma _{TLB}\circ Tu_{\Omega TLB}\circ T\epsilon _{\unit _{\mathcal{A%
}}}\circ TL\phi _{0}.
\end{eqnarray*}%
Now, let $\xi :TLB\rightarrow \overline{A}$ be some algebra morphism. Then $\xi $
coequalizes at the same time both pairs in $\left( \ref{eq:14bis}\right) $
if and only if%
\begin{eqnarray*}
\xi \circ TLm_{B} &=&\xi \circ \gamma _{TLB}\circ Tm_{\Omega TLB}\circ
T\left( \alpha _{LB}\otimes \alpha _{LB}\right) \circ T\epsilon _{LB\otimes
LB}\circ TL\phi _{2}\left( LB,LB\right) \circ TL\left( \eta _{B}\otimes \eta
_{B}\right) , \\
\xi \circ TLu_{B} &=&\xi \circ \gamma _{TLB}\circ Tu_{\Omega TLB}\circ
T\epsilon _{\unit _{\Aa  }}\circ TL\phi _{0}.
\end{eqnarray*}%
These are equalities in $\hom _{\Alg \left( \Aa  \right)
}\left( TL\left( B\otimes B\right) ,\overline{A}\right) $ and $\hom _{%
\Alg \left( \Aa  \right) }\left( TL\unit ,\overline{A}%
\right)$, respectively. Note that, using the adjunction $\left( T,\Omega
\right)$, one has that the map%
\begin{equation*}
\hom _{\Alg \left( \Aa  \right) }\left( TX,Y\right) %
\xrightarrow{\Phi \left( X,Y\right) }\hom _{\Aa  }\left(
X,\Omega Y\right) :f\mapsto \Omega f\circ \alpha _{X}
\end{equation*}%
has inverse%
\begin{equation*}
\hom _{\Aa  }\left( X,\Omega Y\right) \xrightarrow{\Phi \left(
X,Y\right) ^{-1}}\hom _{\Alg \left( \Aa  \right) }\left(
TX,Y\right) :g\mapsto \gamma _{Y}\circ Tg
\end{equation*}%
By applying $\Phi \left( X,Y\right) $, the equalities above reduce to%
\begin{eqnarray*}
\Omega \xi \circ \alpha _{LB}\circ Lm_{B} &=&\Omega \xi \circ m_{\Omega
TLB}\circ \left( \alpha _{LB}\otimes \alpha _{LB}\right) \circ \epsilon
_{LB\otimes LB}\circ L\phi _{2}\left( LB,LB\right) \circ L\left( \eta
_{B}\otimes \eta _{B}\right) , \\
\Omega \xi \circ \alpha _{LB}\circ Lu_{B} &=&\Omega \xi \circ u_{\Omega
TLB}\circ \epsilon _{\unit _{\Aa  }}\circ L\phi _{0}
\end{eqnarray*}%
i.e.%
\begin{eqnarray*}
\Omega \xi \circ \alpha _{LB}\circ Lm_{B} &=&\Omega \xi \circ m_{\Omega
TLB}\circ \left( \alpha _{LB}\otimes \alpha _{LB}\right) \circ \psi
_{2}\left( B,B\right) , \\
\Omega \xi \circ \alpha _{LB}\circ Lu_{B} &=&\Omega \xi \circ u_{\Omega
TLB}\circ \psi _{0}
\end{eqnarray*}%
Since $\xi :TLB\rightarrow \overline{A}$ is an algebra morphism, $\Omega \xi
\circ m_{\Omega TLB}=m_{A}\circ \left( \Omega \xi \otimes \Omega \xi \right)
$ and $\Omega \xi \circ u_{\Omega TLB}=u_{A}$ so that, if we set $q_\xi:=\Omega
\xi \circ \alpha _{LB}:LB\rightarrow A$, the last displayed equalities above
can be rewritten as 
\begin{eqnarray*}
q_\xi\circ Lm_{B} &=&m_{A}\circ \left( q_\xi\otimes q_\xi\right) \circ \psi _{2}\left(
B,B\right) , \\
q_\xi\circ Lu_{B} &=&u_{A}\circ \psi _{0}.
\end{eqnarray*}%
Since, from the very beginning, $\overline{A}=\left( A,m_{A},u_{A}\right) $
is an algebra, the last displayed equalities mean that $\left( \overline{A}%
,q_\xi\right) $ is an induced algebra of $\overline{B}.$ More precisely, an
algebra morphism $\xi :TLB\rightarrow \overline{A}$ coequalizes the pairs in \eqref{eq:14bis} if and only if $\left( \overline{A},q_\xi:LB\rightarrow A\right) $ is an induced algebra of $%
\overline{B}.$
By the foregoing, we have that $F$ is well-defined on objects. Moreover If $\overline{f}:(\overline{A},\xi) \to (\overline{A'},\xi')$ is a morphism in $\cocone(\overline{B})$, then $\Omega \overline{f}\circ q_\xi=\Omega \overline{f}\circ \Omega
\xi \circ \alpha _{LB}=\Omega (\overline{f}\circ
\xi) \circ \alpha _{LB}=\Omega\xi' \circ \alpha _{LB}=q_{\xi'}$ so that $\overline{f}:(\overline{A},q_\xi) \to (\overline{A'},q_{\xi'})$ is a morphism in $\indalg(\overline{B})$ and hence $F$ is well-defined on morphisms too.
Let now $(\overline{A},q)$ be an object in $\indalg(\overline{B})$. Via the adjunction $(T,\Omega,\alpha,\gamma)$, we have that $q=\Omega
\xi_q \circ \alpha _{LB}:LB\rightarrow A$ where $\xi_q:=\gamma_{\overline{A}}\circ Tq$. By the first part of the statement, we have that $\xi_q$ coequalizes \eqref{eq:14bis} so that $(\overline{A},\xi_q)$ is an object in $\cocone(\overline{B})$. Thus we can define $G:\indalg(\overline{B})\to \cocone(\overline{B}),\, (\overline{A},q)\mapsto (\overline{A},\xi_q),\,\overline{f}\mapsto \overline{f}$. Note that $G$ is well-defined on morphisms as, given $\overline{f}:(\overline{A},q)\to (\overline{A'},q')$, we have $\overline{f}\circ\xi_q=\overline{f}\circ\gamma_{\overline{A}}\circ Tq=\gamma_{\overline{A'}}\circ T\Omega\overline{f}\circ Tq=\gamma_{\overline{A'}}\circ T(\Omega\overline{f}\circ q)=\gamma_{\overline{A'}}\circ Tq'=\xi_{q'}$. Since $(T,\Omega,\alpha,\gamma)$ is an adjunction, it is clear that $\xi=\xi_{q_\xi}$ and $q=q_{\xi_q}$ so that we get that $F\circ G$ and $G\circ F$ act as the identity functors on objects. Since they also act as the identity on morphisms, we get $F\circ G=\Id$ and $G\circ F=\Id$.
\end{proof}

As a consequence of Proposition \ref{pro:induced1} and of the observation we made that a colimit for \eqref{eq:14bis} is nothing but an initial object in the category $\cocone(\overline{B})$, we get the following characterization.

\begin{proposition}
\label{pro:induced2} In the Setting \ref{not:setting}, the
following assertions are equivalent for any algebra $\overline{B}:=\left(
B,m_{B},u_{B}\right) $ in $\Bb  $.

\begin{enumerate}
\item $\left( \overline{P},p:LB\rightarrow
\Omega \overline{P}\right) $ is an initial object in the category $\indalg(\overline{B})$ of induced algebras of $\overline{B}$. 

\item $\left( \overline{P},\kappa:TLB\rightarrow
\overline{P} \right) $ is a colimit for \eqref{eq:14bis}.
\end{enumerate}

The morphisms $p$ and $\kappa $ correspond to each other through the adjunction
$(T,\Omega,\alpha,\gamma)$ i.e. $p:=\Omega \kappa \circ \alpha _{LB}$ and $%
\kappa :=\gamma _{P}\circ Tp$.
\end{proposition}

By using Proposition \ref{pro:induced2} we are now able to rewrite Theorem \ref{teo:Tambara} in a different form.

\begin{theorem}
\label{teo:redinv}In the Setting \ref{not:setting}, assume
that for any algebra $\overline{B}:=\left( B,m_{B},u_{B}\right) $ in $%
\Bb  $ there is an initial object $\left( \overline{P}_{\overline{B}%
},p_{\overline{B}}:LB\rightarrow \Omega \overline{P}_{\overline{B}}\right) $
in the category $\indalg(\overline{B})$ of induced algebras of $\overline{B}$.

Then $\overline{R}=\Alg(R):\Alg (\Aa  )\rightarrow \Alg (%
\Bb  )$ has a left adjoint $\overline{L}$ defined by $\overline{L}\,\overline{B}:=\overline{P}_{\overline{B}}$ for any $\overline{B}$ as above. Moreover the morphisms $p_{\overline{B}}$ define a natural transformation $p:L\Omega
\rightarrow \Omega \overline{L}$ whose naturality completely determines how $\overline{L}$ acts on morphisms.
\end{theorem}

\begin{proof}
Since condition $(1)$ in Proposition \ref{pro:induced2} is satisfied, we know there is a morphism $\kappa _{\overline{B}}:TLB\rightarrow
\overline{P}_{\overline{B}}$ such that $\left( \overline{P}_{\overline{B}},\kappa_{\overline{B}} \right) $ is a colimit for $\left( \ref{eq:14bis}\right) .$ Moreover we have that $p_{\overline{B}}=\Omega\kappa_{\overline{B}}\circ\alpha_{LB}$.

 By Theorem \ref{teo:Tambara}, this colimit yields a
functor $\overline{L}$ which is a left adjoint of the functor $\overline{R}:{%
\Alg }({\Aa  })\rightarrow {\Alg }({\Bb  })$ and the morphisms $\kappa_{\overline{B}}$ define a natural transformation $\kappa:TL\Omega\to \overline{L}$. Explicitly $\overline{L}\,\overline{B}:=\overline{P}_{\overline{B}}$ and the action of $\overline{L}$ on morphisms is uniquely determined by the naturality of $\kappa$. From $p_{\overline{B}}=\Omega\kappa_{\overline{B}}\circ\alpha_{LB}$ we get that the morphisms $p_{\overline{B}}$ define a natural transformation $p:L\Omega
\rightarrow \Omega \overline{L}$ such that $p=\Omega\kappa\circ\alpha L\Omega$.
Indeed, since we also have $\kappa_{\overline{B}} =\gamma _{\overline{L}\,\overline{B}}\circ Tp_{\overline{B}}$, the naturality of $\kappa$ is equivalent to the naturality of $p$ and hence the latter completely determines the action of $\overline{L}$ on morphisms as well.
\end{proof}

%

Theorem \ref{teo:redinv} shows how central is the role played by an initial object in the category $\indalg(\overline{B})$ of induced algebras of $\overline{B}$. Under suitable assumptions, we will see that this category can be replaced by three other categories that can be more easy to handle in practice. One of them is $\indobj(\overline{B})$ while the remaining two, namely $\epindobj(\overline{B})$ and $\epindalg(\overline{B})$, are introduced in Subsection \ref{sec:epi}. 

\subsection{Epi-induced objects and algebras and initial objects}\label{sec:epi}

Here we introduce the categories $\epindobj(\overline{B})$ and $\epindalg(\overline{B})$ and, in Theorem \ref{teo:red}, we show that, under the proper assumptions, the four categories $\epindobj(\overline{B})$, $\indobj(\overline{B})$, $\epindalg(\overline{B})$ and $\indalg(\overline{B})$ have the same initial object, if any. This will be done by exploiting the results  on relative weak coreflections and fibrations of Section \ref{sec:relweakcoref}.

\begin{definition}
By an \textit{epi-induced object (or algebra) of }$%
\overline{B}$ we mean an induced object (or algebra) $\left( \overline{A},q\right) $ of $%
\overline{B}$ such that $q$ is an epimorphism.

We denote by $\epindobj(\overline{B})$ the full subcategory of $\indobj(\overline{B})$ formed by epi-induced objects and by $\epindalg(\overline{B})$ the full subcategory of $\indalg(\overline{B})$ formed by epi-induced algebras.
\end{definition}

\begin{remark}\label{rem:functors}
Clearly the forgetful functor $\Omega :\Alg \left(
\Aa  \right) \rightarrow \Aa  $ induces the faithful functors
\begin{gather*}
U: \epindalg(\overline{B})\to\epindobj(\overline{B}),\quad  (\overline{A},q)\mapsto(\overline{A},q),\quad \overline{h}\mapsto \Omega \overline{h},\\V: \indalg(\overline{B})\to\indobj(\overline{B}),\quad  (\overline{A},q)\mapsto(\overline{A},q),\quad \overline{h}\mapsto \Omega \overline{h},
\end{gather*}
that make commute the following diagram of functors
\begin{equation*}
\xymatrixrowsep{.6cm}\xymatrix{ \epindalg(\overline{B})\pulb\ar@{^(->}[d] \ar[r]^-{U} &\epindobj(\overline{B})\ar@{^(->}[d] \\\indalg(\overline{B}) \ar[r]^-{V} &\indobj(\overline{B})}
\end{equation*}
where the vertical arrows  are the canonical full embeddings. Note that the category $\epindalg(\overline{B})$ is the pullback of $\epindobj(\overline{B})$ along $V$  meaning that it is the full subcategory of $\indalg(\overline{B})$ consisting of objects whose image through $V$ belongs to $\epindobj(\overline{B})$.
\end{remark}

The assumptions we will use, include the notion of (Epi, StrongMono)-factorization. Let us
recall the definition of a strong monomorphism in a
category.

\begin{definition}
A monomorphism $m $ is called \textit{strong} if for every commutative square%
\begin{equation*}
\begin{array}{ccc}
\xymatrix{\bullet\ar[d]_ {u}\ar[rr]^-{e} && {\bullet}\ar[d]^{v}
\ar@{.>}[dll]_w \\ \bullet \ar[rr]^-{m} && {\bullet} } &  &
\end{array}%
\end{equation*}
where $e$ is an epimorphism, there is a unique morphism $w$ such that $%
w\circ e=u $ and $m\circ w=v.$ 
\end{definition}

For instance, one can easily verify that a regular monomorphism is always
strong.

\begin{remark}\label{rem:coimage}
Following \cite[page 12]{Mitchell}, recall that a \textit{coimage} of a
morphism $q:Q\rightarrow A$ in an arbitrary category $\Cc$ is a pair $\mathrm{Coim}%
\left( q\right) :=\left( A^{\prime },q^{\prime }\right) $ where $q^{\prime }:Q\rightarrow A^{\prime
}$ is an epimorphism such that $q$ factors through $q'$ and, if there is another epimorphism $q^{\prime
\prime }:Q\rightarrow A^{\prime \prime }$ such that $q$ factors through $q''$, then $q'$ factors through $q''$. In other words $\mathrm{Coim}\left(q\right) $ is the biggest epi-induced object of $Q$ that $q$ factors through.
\begin{equation*}
\xymatrixrowsep{.5cm}\xymatrix{&&Q\ar[lld]_{q''}\ar[ld]^{q'}\ar[dd]^{q}\\A''\ar@{.>}[r]\ar@{.>}[drr]&A'\ar@{.>}[dr]\\&&A}
\end{equation*}
Now, consider a morphism $q:Q\rightarrow A$ in $\Cc$ that admits an (Epi, StrongMono)-factorization
i.e. $q$ factors as an epimorphism $q^{\prime }:Q\rightarrow A^{\prime }$
followed by a strong monomorphism $h^{\prime }:A^{\prime }\rightarrow A,$ so
that $q=h^{\prime }\circ q^{\prime }.$ Then $\left( A^{\prime },q^{\prime
}\right) =\mathrm{Coim}\left( q\right) .$
\end{remark}

\subsection{Comparing the Initial objects in \texorpdfstring{$\indobj(\overline{B})$}{TEXT} and \texorpdfstring{$\epindobj(\overline{B})$}{TEXT}}
We are going to prove Proposition \ref{prop:initialepiobj} which compares the initial objects in $\indobj(\overline{B})$ and $\epindobj(\overline{B})$. First we need two lemmata.

\begin{lemma}
\label{lem:refine}Let $(L:\Bb  \to\Aa  ,\psi_2,\psi_0)$ be a
colax monoidal functor and let $\left( \overline{E},q\right) \in\indobj(\overline{B})$ be such that

\begin{itemize}
\item $q$ factors as $q=h\circ q^{\prime }$, where $h:E^{\prime
}\rightarrow E$ is a strong monomorphism and $q^{\prime }:LB\rightarrow
E^{\prime }$ is an epimorphism;

\item the morphisms $\left( q^{\prime }\otimes q^{\prime }\right) \circ \psi
_{2}\left( B,B\right) $ and $\psi _{0}$ are epimorphisms.
\end{itemize}
Then $E^{\prime }$ becomes an $\left( \overline{E^{\prime }},q^{\prime }\right) \in\indobj(\overline{B})$ and $h$ induces a morphism $h:(\overline{E^{\prime }},q^{\prime })\to (\overline{E},q)$ in $\indobj(\overline{B})$ such that $h\circ m_{E^{\prime }}=m_{E}\circ \left( h\otimes h\right) $ and $h\circ u_{E^{\prime }}=u_{E}$.
\end{lemma}

\begin{proof}
We have%
\begin{equation*}
m_{E}\circ \left( h\otimes h\right) \circ \left( q^{\prime }\otimes
q^{\prime }\right) \circ \psi _{2}\left( B,B\right) =m_{E}\circ \left(
q\otimes q\right) \circ \psi _{2}\left( B,B\right) =q\circ Lm_{B}=h\circ
q^{\prime }\circ Lm_{B}
\end{equation*}%
and $u_{E}\circ \psi _{0}=q\circ Lu_{B}=h\circ q^{\prime }\circ Lu_{B}.$
Hence we have the following commutative squares
\begin{equation*}
\begin{array}{ccc}
\xymatrix{L\left( B\otimes B\right) \ar[d]_ {q^{\prime }\circ
Lm_{B}}\ar[rr]^-{\left( q^{\prime }\otimes q^{\prime}\right) \circ \psi
_{2}\left( B,B\right) } && E^{\prime }\otimes E^{\prime }\ar[d] ^{m_{E}\circ
\left( h\otimes h\right)}\ar@{.>}[dll]_{m_{E'}} \\ E^{\prime } \ar[rr]^-{h}
&& {E} } &  &
\end{array}
\qquad
\begin{array}{ccc}
\xymatrix{L\unit \ar[d]_{q^{\prime }\circ Lu_{B}}\ar[rr]^-{\psi _{0}} &&
{\unit }\ar[d]^{u_{E}} \ar@{.>}[dll]_{u_{E'}} \\ E^{\prime }
\ar[rr]^-{h} && {E} } &  &
\end{array}%
\end{equation*}
Since the morphisms $\left( q^{\prime }\otimes q^{\prime }\right) \circ \psi
_{2}\left( B,B\right) $ and $\psi _{0}$ are epimorphisms, and $h$ is a
strong monomorphism, there is a unique morphism $m_{E^{\prime }}$ such that $%
h\circ m_{E^{\prime }}=m_{E}\circ \left( h\otimes h\right) $ and $%
m_{E^{\prime }}\circ \left( q^{\prime }\otimes q^{\prime }\right) \circ \psi
_{2}\left( B,B\right) =q^{\prime }\circ Lm_{B},$ and there is a unique
morphism $u_{E^{\prime }}$ such that $h\circ u_{E^{\prime }}=u_{E}$ and $%
u_{E^{\prime }}\circ \psi _{0}=q^{\prime }\circ Lu_{B}.$ Thus $\left(
\overline{E^{\prime }},q^{\prime }\right) \in\indobj(\overline{B})$, where we set $\overline{E^{\prime }}:=\left( E^{\prime
},m_{E^{\prime }},u_{E^{\prime }}\right) $.
\end{proof}

\begin{lemma}\label{lem:posetalobj}$\epindobj(\overline{B})$ is a replete posetal full subcategory of $\indobj(\overline{B})$.
 \end{lemma}

\begin{proof}
Let $(\overline{A},p)\in\epindobj(\overline{B})$ and $(\overline{A'},p')\in\indobj(\overline{B})$.
\begin{itemize}
  \item Given an isomorphism $h:(\overline{A},p)\to (\overline{A'},p')$, we have $p'=h\circ p$ so that $p'$ is an epimorphism as $p$. Thus $\epindobj(\overline{B})$ is a replete full subcategory of $\indobj(\overline{B})$.
  \item Given morphisms $f,g:(\overline{A},p)\to (\overline{A'},p')$ in $\indobj(\overline{B})$, we get $f\circ p=p'= g\circ p$. Since $p$ is an epimorphism we get $f=g$. In particular $\epindobj(\overline{B})$ is posetal.\qedhere
\end{itemize}
\end{proof}

Denote by $\Ee(\overline{B})$ the class of morphisms $h:(\overline{A^{\prime }},q^{\prime })\to (\overline{A},q)$ in $\indobj(\overline{B})$ such that $h:A'\to A $ is a monomorphism, $h\circ m_{A^{\prime }}=m_{A}\circ \left( h\otimes h\right) $ and $h\circ u_{A^{\prime }}=u_{A}$.

\begin{proposition}\label{prop:initialepiobj}
 In the Setting \ref{not:setting}, assume that

\begin{itemize}
\item If $\left( \overline{A},q\right) \in \indobj(\overline{B})$, then $q$ admits an (Epi,
StrongMono)-factorization $q=h\circ q^{\prime }$,

\item the morphisms $\left( q^{\prime }\otimes q^{\prime }\right) \circ \psi
_{2}\left( B,B\right) $ and $\psi _{0}$ are epimorphisms.
\end{itemize}
Then, $\epindobj(\overline{B})$ is a weakly $\Ee(\overline{B})$-coreflective subcategory of $\indobj(\overline{B})$. Explicitly, given $\left( \overline{A},q\right) \in \indobj(\overline{B})$, we have that $( \overline{A},q)^\star=(\overline{A'},q')$ where $\left( A',q'\right)=\mathrm{Coim}(q)$.
As a consequence $\indobj(\overline{B})$ and $\epindobj(\overline{B})$ have the same initial objects.
\end{proposition}

\begin{proof}
Let $\left( \overline{A},q:LB\rightarrow A\right) $ be an induced object of
$\overline{B}$ in $\Aa  .$ By hypothesis, $q$ admits an (Epi,
StrongMono)-factorization $q=h\circ q^{\prime }$ where $h:A^{\prime
}\rightarrow A$ is a strong monomorphism and $q^{\prime
}:LB\rightarrow A^{\prime }$ is an epimorphism. Moreover the morphisms $%
\left( q^{\prime }\otimes q^{\prime }\right) \circ \psi _{2}\left(
B,B\right) $ and $\psi _{0}$ are epimorphisms. Note that, by Remark \ref{rem:coimage}, we have that $\left( A',q'\right)=\mathrm{Coim}(q)$. We can apply Lemma \ref{lem:refine} to deduce that $A^{\prime }$ becomes an epi-induced object $\left( \overline{A^{\prime }},q^{\prime }\right) $ of $\overline{B}$ and $h$ induces a morphism $h:(\overline{A^{\prime }},q^{\prime })\to (\overline{A},q)$ of induced objects such that $h\circ m_{A^{\prime }}=m_{A}\circ \left( h\otimes h\right) $ and $h\circ u_{A^{\prime }}=u_{A}$. Thus $h:(\overline{A^{\prime }},q^{\prime })\to (\overline{A},q)$ is in $\Ee(\overline{B})$.

We have so proved that, for any $\left( \overline{A},q\right) $ in $\indobj(\overline{B})$, there is $\left(
\overline{A^{\prime }},q^{\prime }\right) $ in $\epindobj(\overline{B})$ and a morphism  $h:(\overline{A^{\prime }},q^{\prime })\to (\overline{A},q)$ in $\Ee(\overline{B})$.
 Given $(\overline{E},p)$ in $\epindalg(\overline{B})$, let us check that $$\hom_{\indobj(\overline{B})}((\overline{E},p),h):\hom_{\indobj(\overline{B})}((\overline{E},p),(\overline{A^{\prime }},q^{\prime }))\to \hom_{\indobj(\overline{B})}((\overline{E},p),(\overline{A},q))$$ is surjective. Given a morphism $f:(\overline{E},p)\to(\overline{A},q)$ in $\indobj(\overline{B})$, we have that $f\circ p=q=h\circ q'$ so that $f\circ p=h\circ q'$. Since $h$ is a strong monomorphism and $p$ is an epimorphism, there is a unique morphism $w:E\to A$ such that $h\circ w=f$ and $w\circ p=q'$. These equalities say we have a morphism $w:(\overline{E},p)\to(\overline{A^{\prime }},q^{\prime })$ whose image through $\hom_{\indalg(\overline{B})}((\overline{E},p),h)$ is exactly the starting morphism $f$.
Thus $\hom_{\indalg(\overline{B})}((\overline{E},p),h)$ is surjective. Hence $\epindalg(\overline{B})$ is a weakly $\Ee(\overline{B})$-coreflective subcategory of $\indalg(\overline{B})$. In particular $\epindalg(\overline{B})$ is a weakly coreflective subcategory of $\indalg(\overline{B})$. This, together with Lemma \ref{lem:posetalobj}, implies that we can apply Proposition \ref{pro:initial} to conclude.
\end{proof}

\subsection{Comparing the Initial objects in \texorpdfstring{$\indalg(\overline{B})$}{TEXT} and \texorpdfstring{$\epindalg(\overline{B})$}{TEXT}}
Next aim is proving Proposition \ref{prop:initialepi}, which compares the initial objects of $\indalg(\overline{B})$ and $\epindalg(\overline{B})$. We first need the following lemmata.

\begin{lemma}\label{lem:posetalalg}$\epindalg(\overline{B})$ is a replete posetal full subcategory of $\indalg(\overline{B})$.
 \end{lemma}

\begin{proof}
It follows by Remark \ref{rem:functors} and Lemma \ref{lem:pullback}.
\end{proof}

\begin{lemma}\label{lem:Vfib}
The functor $V:\indalg(\overline{B}) \to \indobj(\overline{B})$ of Remark \ref{rem:functors} is an $\Ee(\overline{B})$-fibration.
\end{lemma}

\begin{proof}
Let $\left( \overline{A},q\right) \in \indalg(\overline{B})$, $\left(
\overline{A^{\prime }},q^{\prime }\right)\in\indobj(\overline{B})$ and let $h:(\overline{A^{\prime }},q^{\prime })\to V(\overline{A},q)$ be a morphism in $\Ee(\overline{B}).$ Thus $h:A'\to A$ is a monomorphism such that $h\circ m_{A^{\prime }}=m_{A}\circ \left( h\otimes h\right) $ and $h\circ u_{A^{\prime }}=u_{A}$. Since $h$ is a monomorphism, one easily checks that $\overline{A^{\prime }}=(A^{\prime },m_{A^{\prime }},u_{A^{\prime }})$ is an algebra, by using the fact that $\overline{A}=(A,m_{A},u_{A})$ is an algebra. Then $h$ induces an algebra morphism $\overline{h}:\overline{A^{\prime }}\to \overline{A}$ such that $\Omega \overline{h}=h$ and  we get a morphism $\overline{h}:(\overline{A^{\prime }},q^{\prime })\to (\overline{A},q)$ in $\indalg(\overline{B})$ whose image through $V$ is $h:(\overline{A^{\prime }},q^{\prime })\to V(\overline{A},q)$. It remains to check that $\overline{h}$ is cartesian over $h$. Given a morphism $\overline{g}:(\overline{A''},q')\to (\overline{A},q)$ in $\indalg(\overline{B})$ and a morphism $l:V(\overline{A''},q'')\to V(\overline{A^{\prime }},q^{\prime })$ such that $h\circ l=V\overline{g}=:g$, we have $h\circ l\circ m_{A''}=g\circ m_{A''}=m_{A}\circ (g\otimes g)=m_{A}\circ (h\otimes h)\circ (l\otimes l)=h\circ m_{A'}\circ (l\otimes l)$ so that $l\circ m_{A''}= m_{A'}\circ (l\otimes l)$ as $h$ is a monomorphism. Similarly $h\circ l\circ u_{A''}=g\circ u_{A''}= u_{A}=h\circ u_{A'}$ and hence $l\circ u_{A''}=u_{A'}$. Therefore there is an algebra morphism $\overline{l}:\overline{A''}\to \overline{A'}$ such that $\Omega \overline{l}=l$. Thus $\overline{l}:(\overline{A''},q'')\to (\overline{A^{\prime }},q^{\prime })$ is a morphism whose image through $V$ is $l:V(\overline{A''},q'')\to V(\overline{A^{\prime }},q^{\prime })$ and such that $\overline{h}\circ \overline{l}=\overline{g}$.
\end{proof}

\begin{proposition}\label{prop:initialepi}
 In the Setting \ref{not:setting}, assume that

\begin{itemize}
\item If $\left( \overline{A},q\right) \in \indobj(\overline{B})$, then $q$ admits an (Epi,
StrongMono)-factorization $q=h\circ q^{\prime }$,

\item the morphisms $\left( q^{\prime }\otimes q^{\prime }\right) \circ \psi
_{2}\left( B,B\right) $ and $\psi _{0}$ are epimorphisms.
\end{itemize}
Then, $\epindalg(\overline{B})$ is a weakly coreflective subcategory of $\indalg(\overline{B})$. As a consequence $\indalg(\overline{B})$ and $\epindalg(\overline{B})$ have the same initial objects.
\end{proposition}

\begin{proof}
Our hypotheses guarantee that we can apply Proposition \ref{prop:initialepiobj} to get that $\epindobj(\overline{B})$ is a weakly $\Ee(\overline{B})$-coreflective subcategory of $\indobj(\overline{B})$.  Moreover, by Lemma \ref{lem:Vfib}, the functor  $V:\indalg(\overline{B}) \to \indobj(\overline{B})$ is an $\Ee(\overline{B})$-fibration. Therefore, we can apply Proposition \ref{pro:pullbackweak} to the diagram in Remark \ref{rem:functors} to get that $\epindalg(\overline{B})$ is a weakly coreflective subcategory of $\indalg(\overline{B})$. This, together with Lemma \ref{lem:posetalalg} imply that we can apply Proposition \ref{pro:initial} to conclude.
\end{proof}

\subsection{Comparing all of the Initial objects}

Next aim is to obtain Theorem \ref{teo:red}, where we compare the initial objects in the categories $\epindobj(\overline{B})$, $\indobj(\overline{B})$, $\epindalg(\overline{B})$ and $\indalg(\overline{B})$ altogether. First we need some lemmata.\medskip

Given an induced algebra $\left( \overline{E},q\right) $ of $\overline{B},$
the next lemma shows that, under mild assumptions, $\mathrm{Coim}\left(
q\right) =\left( E^{\prime },q^{\prime }\right) $ becomes an induced algebra
of $\overline{B}$. 

\begin{lemma}
\label{lem:red}Let $\left( L:\Bb  \rightarrow \Aa  ,\psi
_{2},\psi _{0}\right) $ be a colax monoidal functor. Let $q:LB\rightarrow A$
me a morphism that admits two (Epi, StrongMono)-factorizations $q=h\circ
q^{\prime }$ and $q=h^{\prime }\circ q^{\prime \prime }$. We have that
\begin{itemize}
  \item $\left( q^{\prime }\otimes q^{\prime }\right)
\circ \psi _{2}\left( B,B\right) $ is an
epimorphism if and only if so is $\left( q^{\prime \prime }\otimes q^{\prime \prime
}\right) \circ \psi _{2}\left( B,B\right) $;
  \item $\left( q^{\prime }\otimes
q^{\prime }\otimes q^{\prime }\right) \circ \left( LB\otimes \psi _{2}\left(
B,B\right) \right) \circ \psi _{2}\left( B,B\otimes B\right) $ is an
epimorphism if and only if  so is $\left( q^{\prime \prime
}\otimes q^{\prime \prime }\otimes q^{\prime \prime }\right) \circ \left(
LB\otimes \psi _{2}\left( B,B\right) \right) \circ \psi _{2}\left(
B,B\otimes B\right) $.
\end{itemize}
\end{lemma}

\begin{proof}
Denote by $P^{\prime }$ the domain of $h$ and by $P^{\prime \prime }$ the
domain of $h^{\prime }$. By uniqueness of the (Epi,
StrongMono)-factorizations, we have an isomorphism $w:P^{\prime }\rightarrow
P^{\prime \prime }$ such that $w\circ q^{\prime }=q^{\prime \prime }.$ Hence
$\left( q^{\prime \prime }\otimes q^{\prime \prime }\right) \circ \psi
_{2}\left( B,B\right) =\left( w\otimes w\right) \circ \left( q^{\prime
}\otimes q^{\prime }\right) \circ \psi _{2}\left( B,B\right) $ from which the first item follows. Similarly one treats the second one.
\end{proof}

\begin{lemma}\label{lem:red2}
In the Setting \ref{not:setting}, assume that $\psi_0$ is an epimorphism and let $\left( \overline{A},q\right) \in \epindalg(\overline{B})$ be such that $q$ admits an (Epi,StrongMono)-factorization $q=h\circ q^{\prime }$.
\begin{enumerate}
  \item[$1)$] If $\left( q^{\prime }\otimes q^{\prime }\right) \circ \psi _{2}\left( B,B\right) $ is an epimorphism, then so is  $\left( q\otimes q\right) \circ \psi _{2}\left( B,B\right) $.
  \item[$2)$] If $\left( q^{\prime }\otimes
q^{\prime }\otimes q^{\prime }\right) \circ \left( LB\otimes \psi _{2}\left(
B,B\right) \right) \circ \psi _{2}\left( B,B\otimes B\right) $ is an epimorphism, then so is $\left( q\otimes
q\otimes q\right) \circ \left( LB\otimes \psi _{2}\left(
B,B\right) \right) \circ \psi _{2}\left( B,B\otimes B\right) $.
\end{enumerate}
\end{lemma}

\begin{proof}
 We just prove $1)$, the argument for $2)$ being similar. Since $q $ is an epimorphism, then $q=\mathrm{Id}\circ q$ is (Epi, StrongMono)-factorization. Since $\left( q^{\prime }\otimes q^{\prime }\right) \circ \psi _{2}\left( B,B\right) $ is an epimorphism, by Lemma \ref{lem:red}, so is $\left( q\otimes q\right) \circ \psi
_{2}\left( B,B\right) $.
\end{proof}

\begin{lemma}
\label{lem:indalgmor2} Let $\left( L:\Bb  \rightarrow \Aa  ,\psi
_{2},\psi _{0}\right) $ be a colax monoidal functor and let $\overline{B}%
\in\Alg(\Bb ) $. Let $\left(
\overline{A},q\right) $ and $\left( \overline{A^{\prime }}%
,q^{\prime }\right) $ be in $\indalg(\overline{B})$. Assume that $\left( q\otimes q\right) \circ \psi _{2}\left(
B,B\right) $ and $\psi _{0}$ are epimorphisms. Then any morphism $h:A\rightarrow A^{\prime }$ such that $h\circ q=q^{\prime
} $ becomes a morphism $\overline{h}:(\overline{A},q)\to (\overline{A'},q') $ in $\indalg(\overline{B})$.
\end{lemma}

\begin{proof}
We compute%
\begin{eqnarray*}
m_{A^{\prime }}\circ \left( h\otimes h\right) \circ \left( q\otimes q\right)
\circ \psi _{2}\left( B,B\right) &=&m_{A^{\prime }}\circ \left( q^{\prime
}\otimes q^{\prime }\right) \circ \psi _{2}\left( B,B\right) =q^{\prime
}\circ Lm_{B} \\
&=&h\circ q\circ Lm_{B}=h\circ m_{A}\circ \left( q\otimes q\right) \circ
\psi _{2}\left( B,B\right) \\
h\circ u_{A}\circ \psi _{0} &=&h\circ q\circ Lu_{B}=q^{\prime }\circ
Lu_{B}=u_{A^{\prime }}\circ \psi _{0}
\end{eqnarray*}%
so that, in view of the assumptions, we deduce that $m_{A^{\prime }}\circ
\left( h\otimes h\right) =h\circ m_{A}$ and $h\circ u_{A}=u_{A^{\prime }}$
i.e. that $h$ becomes an algebra morphism $\overline{h}:\overline{A}\rightarrow \overline{%
A^{\prime }}$ such that $\Omega \overline{h}=h$. Since $\Omega \overline{h}\circ q=q^{\prime
} $ we get that $\overline{h}$ is a morphism in $\indalg(\overline{B})$.
\end{proof}

\begin{lemma}\label{lem:Veff}
In the Setting \ref{not:setting}, assume that

\begin{itemize}
\item If $\left( \overline{A},q\right) \in \epindalg(\overline{B})$, then $q$ admits an (Epi,
StrongMono)-factorization $q=h\circ q^{\prime }$,

\item the morphisms $\left( q^{\prime }\otimes q^{\prime }\right) \circ \psi
_{2}\left( B,B\right) $ and $\psi _{0}$ are epimorphisms.
\end{itemize} Then the functor $U:\epindalg(\overline{B})\to \epindobj(\overline{B})$ of Remark \ref{rem:functors} is fully faithful.
\end{lemma}

\begin{proof}
 By construction $U$ is faithful. Let $(\overline{A},q),(\overline{A'},q')\in\epindalg(\overline{B})$ and let $h:(\overline{A},q)\to(\overline{A'},p)$ be a morphism in $\epindobj(\overline{B})$. Then  $h\circ q=p$. By Lemma \ref{lem:red2} $1)$, we have that $\left( q\otimes q\right) \circ \psi
_{2}\left( B,B\right) $ is an epimorphism  so that we can apply Lemma \ref{lem:indalgmor2} to get an algebra morphism $\overline{h}$ such that $\Omega \overline{h}=h$. Therefore $\Omega \overline{h}\circ q=p$ and hence we have a morphism $\overline{h}:(\overline{A},q)\to(\overline{A'},p)$ in $\epindalg(\overline{B})$ whose image through $U$ is $h$.
\end{proof}

The next aim is to reduce to the case where epi-induced object are epi-induced
algebras.

\begin{lemma}
\label{lem:bast} Let $\left( L:\Bb  \rightarrow \Aa  ,\psi
_{2},\psi _{0}\right) $ be a colax monoidal functor and let $\overline{B}%
\in\Alg(\Bb ) $.

Let $\left(
\overline{A},q\right)\in \epindobj(\overline{B})$ be such that $\left( q\otimes q\otimes q\right) \circ \left( LB\otimes
\psi _{2}\left( B,B\right) \right) \circ \psi _{2}\left( B,B\otimes B\right)
$ is an epimorphism. Then $\left( \overline{A},q\right)\in\epindalg(\overline{B}) $.
\end{lemma}

\begin{proof}
Let $\left( \overline{E},q:LB\rightarrow E\right) \in \epindobj(\overline{B})$. One easily verifies that
\begin{equation*}
m_{E}\circ \left( m_{E}\otimes E\right) \circ \left( q\otimes q\otimes
q\right) \circ \left( \psi _{2}\left( B,B\right) \otimes LB\right) \circ
\psi _{2}\left( B\otimes B,B\right)=q\circ Lm_{B}\circ L\left( m_{B}\otimes
B\right)~\text{and}
\end{equation*}
\begin{equation*}
m_{E}\circ \left( E\otimes m_{E}\right) \circ \left( q\otimes q\otimes
q\right) \circ \left( LB\otimes \psi _{2}\left( B,B\right) \right) \circ
\psi _{2}\left( B,B\otimes B\right) =q\circ Lm_{B}\circ L\left( B\otimes
m_{B}\right) .
\end{equation*}

Since $m_{B}$ is associative and $\left( \psi _{2}\left( B,B\right) \otimes
LB\right) \circ \psi _{2}\left( B\otimes B,B\right) =\left( LB\otimes \psi
_{2}\left( B,B\right) \right) \circ \psi _{2}\left( B,B\otimes B\right) $
and $\left( q\otimes q\otimes q\right) \circ \left( LB\otimes \psi
_{2}\left( B,B\right) \right) \circ \psi _{2}\left( B,B\otimes B\right)$ is
an epimorphism, we deduce that $m_{E}$ is associative too. Note that%
\begin{eqnarray*}
l_{E}\circ \left( \psi _{0}\otimes q\right) \circ \psi _{2}\left( \unit %
,B\right) &=&l_{E}\circ \left( \unit \otimes q\right) \circ \left( \psi
_{0}\otimes LB\right) \circ \psi _{2}\left( \unit ,B\right) \\
&=&q\circ l_{LB}\circ \left( \psi _{0}\otimes LB\right) \circ \psi
_{2}\left( \unit ,B\right) =q\circ Ll_{B}
\end{eqnarray*}%
and hence, since $q$ is an epimorphism, we deduce that $\left( \psi
_{0}\otimes q\right) \circ \psi _{2}\left( \unit ,B\right) $ is an
epimorphism too. Using naturality of $\psi _{2}$, we have
\begin{eqnarray*}
m_{E}\circ \left( u_{E}\otimes E\right) \circ \left( \psi _{0}\otimes
q\right) \circ \psi _{2}\left( \unit ,B\right) &=&m_{E}\circ \left(
q\otimes q\right) \circ \left( Lu_{B}\otimes LB\right) \circ \psi _{2}\left(
\unit ,B\right) \\
&=&q\circ Lm_{B}\circ L\left( u_{B}\otimes B\right) \\
&=&q\circ L\left( l_{B}\right) \\
&=&q\circ l_{LB}\circ \left( \psi _{0}\otimes LB\right) \circ \psi
_{2}\left( \unit ,B\right) \\
&=&l_{E}\circ \left( \psi _{0}\otimes q\right) \circ \psi _{2}\left( \mathbf{%
1},B\right)
\end{eqnarray*}

so that $m_{E}\circ \left( u_{E}\otimes E\right) =l_{E}.$ Similarly one
proves that $m_{E}\circ \left( E\otimes u_{E}\right) =r_{E}$.
Then $\overline{E}$ is an algebra so that $\left( \overline{E},q\right)\in\epindalg(\overline{B}) $.
\end{proof}

We are now able to prove the announced result.


\begin{theorem}\label{teo:red} In the Setting \ref{not:setting}, assume that

\begin{itemize}
\item if $\left( \overline{A},q\right) \in\indobj(\overline{B})$, then $q$ admits an (Epi,
StrongMono)-factorization $q=h\circ q^{\prime }$,

\item the morphisms $\left( q^{\prime }\otimes q^{\prime }\right) \circ \psi
_{2}\left( B,B\right) $ and $\psi _{0}$ are epimorphisms,

\item the morphism $\left( q^{\prime }\otimes q^{\prime }\otimes
q^{\prime }\right) \circ \left( LB\otimes \psi _{2}\left( B,B\right) \right)
\circ \psi _{2}\left( B,B\otimes B\right) $ is an epimorphism.
\end{itemize}

The following assertions are equivalent.

\begin{enumerate}
\item[(1)] $\left( \overline{P},p\right) $ is an initial object in $\indobj(\overline{B})$.

\item[(2)] $\left( \overline{P},p\right) $ is an initial object in $\epindobj(\overline{B})$.

\item[(3)] $\left( \overline{P},p\right) $ is an initial object in $\indalg(\overline{B})$.

\item[(4)] $\left( \overline{P},p\right) $ is an initial object in $\epindalg(\overline{B})$.
\end{enumerate}
\end{theorem}

\begin{proof}
$\left( 1\right) \Leftrightarrow\left( 2\right).$ This follows from is Proposition \ref{prop:initialepiobj}.

$\left( 3\right) \Leftrightarrow\left( 4\right).$ This follows from is Proposition \ref{prop:initialepi}.

$\left( 2\right) \Leftrightarrow\left( 4\right) .$  By Lemma \ref{lem:Veff}, the functor $U:\epindalg(\overline{B})\to \epindobj(\overline{B})$ of Remark \ref{rem:functors} is fully faithful. By construction $U$ is also injective on objects. In order to conclude we check that it is also surjective on objects whence a category isomorphism. Let $(\overline{A},q)\in \epindobj(\overline{B})$.
By Lemma \ref{lem:red2} and the assumptions, we have that $\left( q\otimes q\otimes q\right) \circ
\left( LB\otimes \psi _{2}\left( B,B\right) \right) \circ \psi _{2}\left(
B,B\otimes B\right) $ is an epimorphism. By Lemma \ref{lem:bast}, we have $(\overline{A},q)\in\epindalg(\overline{B})$.
\end{proof}

\subsection{Constructing the Initial object in \texorpdfstring{$\epindobj(\overline{B})$}{TEXT}}

By Theorem \ref{teo:red}, under the relevant assumptions, to have an initial object in $\epindobj(\overline{B})$ is equivalent to having an initial object in $\indalg(\overline{B})$. By Proposition \ref{pro:induced2}, this is equivalent to having a colimit for \eqref{eq:14bis}, yielding then an explicit description  $\overline{L}\,
\overline{B}$. For this reason it is worthwhile to provide a construction of an initial object in $\epindobj(\overline{B})$. To this aim we first need to prove the following result.

\begin{lemma}
\label{lem:prod}
Let $I$ be a set and let $\left( \overline{E_i},q_{i}\right)_{i\in I}$ be a family of objects in  $\indobj(\overline{B})$. Assume that the family $\left( E_{i}\right) _{i\in I}$
has a product $\left( E,\left( p_{t}\right) _{t\in
I}\right) $ in $\Aa  $ and let $q:LB\rightarrow E$ be the unique morphism
such that $q_{i}=p_{i}\circ q$ for every $i.$ Then $E$ induces a tern $\overline{E}=(E,m_E,u_E)$ such that $(\left( \overline{E},q\right),(p_t)_{t\in I})$ is the product of the family $\left( \overline{E_{i}},q_{i}\right)_{i\in I}$ in $\indobj(\overline{B})$.

\end{lemma}

\begin{proof}By the universal property of the product, there are unique morphisms $%
q:LB\rightarrow E$, $m_E:E\otimes E\to E$ and $u_E:\unit\to E$ such that $p_{i}\circ q=q_{i}$, $p_i\circ m_E=m_{E_i}\circ (p_i\otimes p_i)$ and $p_i\circ u_E=u_{E_i}$, for every $i\in I$. Set $\overline{E}=(E,m_E,u_E)$. We have
\begin{align*}
p_{i}\circ q\circ Lm_{B} &=q_{i}\circ Lm_{B}=m_{E_{i}}\circ \left(
q_{i}\otimes q_{i}\right) \circ \psi _{2}\left( B,B\right) =m_{E_{i}}\circ \left( p_{i}\otimes p_{i}\right) \circ \left( q\otimes
q\right) \circ \psi _{2}\left( B,B\right) \\
&=p_{i}\circ m_{E}\circ \left( q\otimes q\right) \circ \psi _{2}\left(
B,B\right) , \\
p_{i}\circ q\circ Lu_{B} &=q_{i}\circ Lu_{B}=u_{E_{i}}\circ \psi
_{0}=p_{i}\circ u_{E}\circ \psi _{0}.
\end{align*}%
By the uniqueness in the universal property of the product, we get that
$$
q\circ Lm_{B} =m_{E}\circ \left( q\otimes q\right) \circ \psi _{2}\left(
B,B\right) , \qquad q\circ Lu_{B} =u_{E}\circ \psi _{0}.
$$
This proves that $\left( \overline{E},q\right) $ belongs to  $\indobj(\overline{B})$. Let us check it defines the desired product. From the equality $q_{t}=p_{t}\circ q$, we get that $p_t:E\to  E_t$ yields the projection $p_t:\left( \overline{E},q\right)\to \left( \overline{E_t},q_t\right)$ in $\indobj(\overline{B})$. Given, for every $t\in I$, a morphism $h_t:(\overline{A},p)\to(\overline{E_t},q_t)$ in $\indobj(\overline{B})$, by the universal property of $\left(
E,\left( p_{t}\right) _{t\in I}\right) $, there is a unique morphism $h:A\to E$ such that $p_{t} \circ h=h_t$. Since $p_t\circ h\circ p= h_t\circ p=q_t$, the uniqueness implies $h\circ p=q$ so that we get a morphism $h:(\overline{A},p)\to(\overline{E},q)$ that composed by the projection yields $h_t:(\overline{A},p)\to(\overline{E_t},q_t)$. Its uniqueness follows from the universal property of $\left(
E,\left( p_{t}\right) _{t\in I}\right) $.
\end{proof}


\begin{proposition}\label{pro:constinepi}
In the Setting \ref{not:setting}, assume that

\begin{itemize}
\item if $\left( \overline{A},q\right) \in\indobj(\overline{B})$, then $q$ admits an (Epi,
StrongMono)-factorization $q=h\circ q^{\prime }$;

\item the morphisms $\left( q^{\prime }\otimes q^{\prime }\right) \circ \psi
_{2}\left( B,B\right) $ and $\psi _{0}$ are epimorphisms;

\item there is set  $\Ss_{B}$ of objects of $\epindobj(\overline{B})$ such that each object   in  $\epindobj(\overline{B})$ is isomorphic to an element  in $\Ss_{B}$;

\item in $\Aa  $ there exists the product $E$ of the family $(D)_{(\overline{D},\delta_D)\in \Ss_{B}}$.

\end{itemize}
Then there is $\left( \overline{C},\delta _{C}\right)\in\mathcal{S}_{B}$ which is an initial object in $\epindobj(\overline{B})$ such that $\left( C,\delta _{C}\right)=\mathrm{Coim}(\delta)$ where $\delta :LB\rightarrow E$ is the diagonal
morphism of the family $\left( \delta _{D}\right) _{(\overline{D},\delta_D)\in \mathcal{S}_{B}}$.
\end{proposition}

\begin{proof}
By Lemma \ref{lem:posetalobj} and Proposition \ref{prop:initialepiobj}, $\epindobj(\overline{B})$ is a replete posetal weakly coreflective subcategory of $\indobj(\overline{B})$. Since the elements in $\Ss_{B}$ are, in particular objects in $\indobj(\overline{B})$, by Lemma \ref{lem:prod}, the object $E:=\prod\limits_{(\overline{D},\delta_D)\in \Ss_{B}}D$ induces a tern $\overline{E}=(E,m_E,u_E)$ such that $(\left( \overline{E},\delta\right),(p_D)_{(\overline{D},\delta_D)\in \Ss_{B}})$ is the product of the objects of $\Ss_{B}$ in $\indobj(\overline{B})$. By Corollary \ref{coro:constrinitial} applied to the set $\mathcal{S}_{B}$, we get that $( \overline{E},\delta)^\star$ is an initial object in $\epindobj(\overline{B})$. By Proposition \ref{prop:initialepiobj}, we know    that $( \overline{E},\delta)^\star=(\overline{E'},\delta')$ where $\left(E',\delta'\right)=\mathrm{Coim}(\delta)$. Since $(\overline{E'},\delta')=( \overline{E},\delta)^\star\in \epindobj(\overline{B})$, there is $(\overline{C},\delta_C)\in \Ss_{B}$ such that $(\overline{E'},\delta')\cong (\overline{C},\delta_C)$ as objects $\epindobj(\overline{B})$. Thus also $(\overline{C},\delta_C)$ is an initial object in $\epindobj(\overline{B})$ and $\left(C,\delta_C\right)=\mathrm{Coim}(\delta)$.
\end{proof}

\begin{proposition}
\label{pro:brut}In the Setting \ref{not:setting}, assume that

\begin{itemize}
\item the tensor products in $\Aa  $ preserve epimorphisms;

\item $\psi _{0}$ and the components of $\psi _{2}$ are epimorphisms in $\Aa  $;

\item if $( \overline{E},q)\in\indobj(\overline{B}) $, then $q$ admits an
(Epi,StrongMono)-factorization;

\item there is set  $\Ss_{B}$ of objects of $\epindobj(\overline{B})$ such that each object   in  $\epindobj(\overline{B})$ is isomorphic to an element  in $\Ss_{B}$;


\item in $\Aa  $ there exists the product $E$ of the family $(D)_{(\overline{D},\delta_D)\in \Ss_{B}}$.
\end{itemize}
Then there is $\left( \overline{C},\delta _{C}\right)\in\mathcal{S}_{B}$ which is an initial object in $\epindobj(\overline{B})$ such that $\left( C,\delta _{C}\right)=\mathrm{Coim}(\delta)$ where $\delta :LB\rightarrow E$ is the diagonal
morphism of the family $\left( \delta _{D}\right) _{(\overline{D},\delta_D)\in \mathcal{S}_{B}}$.

Finally, if the above assumptions hold for every algebra $\overline{B}$ in $\Bb$, then  $\overline{R}$ has a left adjoint $\overline{L}$ explicitly given by  $\overline{L}\,\overline{B}=\overline{ C}$.

\end{proposition}

\begin{proof}
Proposition \ref{pro:constinepi} ensures that there is $\left( \overline{C},\delta _{C}\right)\in\mathcal{S}_{B}$, as in the statement, which is an initial object in $\epindobj(\overline{B})$. By Theorem \ref{teo:red}, this is also an initial object in $\indalg(\overline{B})$. By Theorem \ref{teo:redinv}, we conclude.
\end{proof}

\begin{notations}\label{not:dualinduced}
For our purposes it is convenient to write Proposition \ref{pro:brut} in case $\Aa  =\Cc^\op $ for a covariant functor $L:\Bb  \to\Cc^\op $ regarded as a contravariant functor $(-)^\diamw:\Bb  \to\Cc$ such that $LB=(B^\diamw)^\op $ and $Lf=(f^\diamw)^\op $, for a morphism $f$. To this aim let us rewrite in $\Cc $ the notion of induced object in $\Aa
=\Cc ^{\op }$ of an algebra $\overline{B} =\left(
B,m_{B},u_{B}\right) $ in $\Bb  $. It consists of a pair $%
\left( \underline{E} ,e:E\rightarrow B^\diamw\right) $%
, where $\underline{E}=\left( E,\Delta _{E},\varepsilon _{E}\right)$ with $E$ an object in $\Cc $ and $\Delta _{E}:E\to E\otimes E,\varepsilon _{E}:E\to \unit $
and $e$ morphisms in $\Cc $ such that%
\begin{eqnarray}
(m_{B})^\diamw\circ e &=&\varphi _{2}\left( B,B\right) \circ \left( e\otimes
e\right) \circ \Delta _{E},\label{eq:goodDelta} \\
(u_{B})^\diamw\circ e &=&\varphi _{0}\circ \varepsilon _{E}\label{eq:goodeps}.
\end{eqnarray}%
where $\varphi _{2}\left( B,B\right):B^\diamw\otimes B^\diamw\to (B\otimes B)^\diamw$ and $\varphi _{0}:\unit \to \unit^\diamw $ are determined by $\varphi _{2}\left( B,B\right)^{\op }=\psi _{2}\left( B,B\right)$ and $\varphi _{0}^{\op }=\psi _{0}$ respectively. In this case we will say that $\left( \underline{E},e:E\rightarrow B^\diamw\right) $ is a \emph{good object} of $B^\diamw$ in $\Cc.$

Note that the induced object of $\overline{B}$ in $\Cc^\op $ corresponding to  $%
\left( \underline{E} ,e:E\rightarrow B^\diamw\right) $ is an epi-induced object if and only if $e^\op $ is an epimorphism in $\Cc^\op $ that is $e$ is a monomorphism in $\Cc$. In this case we will say that $\left( \underline{E},e:E\rightarrow B^\diamw\right) $ is a \emph{good subobject} of $B^\diamw$ in $\Cc.$ A morphism of good (sub)objects $h:\left( \underline{E},e:E\rightarrow B^\diamw\right)\to  \left( \underline{E'},e':E'\rightarrow B^\diamw\right)$ is a morphism $h:E\to E'$ such that $e'\circ h=e$. This way we get the category of good (sub)objects of $B^\diamw$ in $\Cc$ which turns out to be anti-isomorphic to the category of (epi-)induced objects of $\overline{B}$ in $\Cc^\op $.
\begin{invisible} This defines the category ${\sf GoodObj}(B^\diamw)$ of good objects.
 Let us  construct a category anti-isomorphism $${\sf GoodObj}(B^\diamw)\to\indobj(\overline{B}),\quad ((E,\Delta,\varepsilon),e))\mapsto ((E^\op,\Delta^\op,\varepsilon^\op),e^\op)),\quad h\mapsto h^\op.$$

Given $h:\left( \underline{E},e:E\rightarrow B^\diamw\right)\to  \left( \underline{E'},e':E'\rightarrow B^\diamw\right)$, we have that $e'\circ h=e$ so that $h^\op\circ (e')^\op=e^\op$ and hence we get a morphism $h^\op:((E'^\op,\Delta'^\op,\varepsilon'^\op),e'^\op))\to ((E^\op,\Delta^\op,\varepsilon^\op),e^\op))$. As a consequence ${\sf GoodObj}(B^\diamw)\cong\indobj(\overline{B})^\op$ and hence an initial object in $\indobj(\overline{B})$ yields a terminal object in ${\sf GoodObj}(B^\diamw)$.
\end{invisible}

\end{notations}

\begin{proposition}
\label{pro:brutop}
Let $%
\Cc $ and $\Bb  $ be monoidal categories and let $R:{\Cc^\op %
}\rightarrow \Bb  $ be a lax monoidal functor with a left adjoint $L,$
unit $\eta $ and counit $\epsilon $. In the Setting \ref{not:dualinduced}, assume that the functor $\mho :%
\Coalg \left( \Cc \right) \rightarrow \Cc $ has a right adjoint and that

\begin{itemize}
\item the tensor products in $\Cc $ preserve monomorphisms;

\item $\psi _{0}$ and the components of $\psi _{2}$ are monomorphisms in $\Cc $;

\item if $\left( \underline{E},e:E\rightarrow B^\diamw\right) $ is a good
object of $B^\diamw$ in $\Cc$, then the morphism $e$ admits an
(StrongEpi,Mono)-factorization in $\Cc$;

\item there is a set $\mathcal{S}_{B}$ of good subobjects of $B^\diamw$ in $\Cc$ such that each good subobjects of $B^\diamw$ in $\Cc$ is isomorphic to an element  in $\Ss_{B}$;

\item in $\Cc$ there exists the coproduct of the family $(D)_{(\underline{D},e_D)\in \Ss_{B}}$.
\end{itemize}

Then there is $(\underline{B^\diamb}, \theta_B:B^\diamb\to B^\diamw)\in\mathcal{S}_{B}$ which is a terminal good subobject of $B^\diamw$ in $\Cc$  such that $(B^\diamb, \theta_B)$ is the sum of the family of subobjects $(D,e_D)_{(\underline{D},e_D)\in \Ss_{B}}$ of $B^\diamw$.

Finally, if the above assumptions hold for every algebra $\overline{B}$ in $\Bb$, then $\overline{R}$ has a left adjoint $\overline{L}$ explicitly given by $\overline{L}\,\overline{B}=(\underline{B^\diamb})^\op$.
\end{proposition}

In the next section, we put all of the above developed theory to work to
explicitly compute lifted auto-adjunctions on categories of so-called
``color bialgebras''.


\section{Application: the group-graded case}\label{applicationsection}

This section is devoted to investigate the case of group-graded vector spaces. To this aim we first need to recall some auxiliary results connected to the notion of pre-rigid category.

\subsection{Pre-rigid monoidal categories}
 In order to discuss the examples of liftable functors of our concern, we  recall the following notion appeared in its original form in \cite[4.1.3]{GV-OnTheDuality}.

\begin{definition}\label{def:pre-rigid}
Following \cite[Definition 2.1]{AGM}, a monoidal category $(\Cc,\otimes,\unit)$ is
called \textit{pre-rigid} if for every object $X$ there exists an object $%
X^{\ast }$ and a morphism $\mathrm{ev}_{X}:X^{\ast }\otimes X\rightarrow
\unit $ (the \emph{evaluation at $X$}) with the following universal property: For every morphism $t:T\otimes X\rightarrow \unit$ there is a
unique morphism $t^\dag:T\rightarrow X^{\ast }$ such that $t=\ev%
_{X}\circ \left(t^\dag\otimes X\right) .$ Equivalently the map
\begin{equation*}
\hom _{\Cc }\left( T,X^{\ast }\right) \rightarrow \hom %
_{\Cc }\left( T\otimes X,\unit \right),\qquad u\mapsto \mathrm{ev}%
_{X}\circ \left( u\otimes X\right)
\end{equation*}%
is bijective for every object $T$ in $\Cc $.
\end{definition}

One has that a (right) closed monoidal category is pre-rigid (cf. \cite[%
Proposition 2.5]{AGM}). Notice that the converse is not true: the category
of bialgebras over a field $k$ for instance is pre-rigid monoidal \cite[%
Examples 2.19.3]{AGM}, but not closed. \newline

The following corollary will be applied to $\Cc =\Vec _{G}$.

\begin{proposition}
\label{pro:Men0} Let $\Cc $ be a pre-rigid braided monoidal
category. Assume that the forgetful functor $\mho:\Coalg (\Cc %
)\to\Cc $ has a right adjoint. Assume also that $\Coalg (%
\Cc )$ has equalizers.

Then $\left( (-)^{*}:\Cc \rightarrow \Cc ^{\op %
},(-)^{*}:\Cc ^{\op }\rightarrow \Cc \right)$ is a
liftable pair of adjoint functors.
\end{proposition}

\begin{proof}
Let $R=(-)^{*}:\Aa  \rightarrow \Bb  $ where $\Aa  :=\Cc ^{%
\op}$ and $\Bb  :=\Cc $. By the assumptions on $\Cc $, the category $\Aa  =\Cc ^{%
\op }$ fulfills the requirements of Theorem \ref{teo:Tambara}
and hence $\overline{R}=\Alg(R):{\Alg }({\Aa  }%
)\rightarrow {\Alg }({\Bb  })$ has a left adjoint $\overline{L}$. We conclude by \cite[Corollary 4.7]{AGM}. 
\end{proof}

If a pre-rigid monoidal category is also braided, we can construct on it an adjunction that under relevant assumptions results in a liftable pair of functors. More precisely we have the following results, which we record here for further use.

\begin{proposition} (cf. \cite[Proposition 4.4]{AGM})
\label{prop:extensionGV} When $\Cc $ is a pre-rigid braided monoidal
category, the assignment $X\mapsto X^*$ induces a functor $R=(-)^{*}:%
\Cc ^{\op }\rightarrow \Cc $ with a left adjoint $L=R^{%
\op }=(-)^{*}:\Cc \rightarrow \Cc ^{\op }.$
Moreover there are $\phi _{2},\phi _{0}$ such that $\left( R,\phi _{2},\phi
_{0}\right) $ is lax monoidal and, the induced colax monoidal structure on $L$ by  \eqref{form:PsiFromPhi2} and \eqref{form:PsiFromPhi0} is specifically $(\phi _{2}^{\op},\phi _{0}^{\op})$.
Explicitly, $\phi _{0}:\unit\rightarrow %
\unit^{\ast }$ is uniquely defined by  $\ev_{\unit}\circ \left( \phi _{0}\otimes \unit\right)
=m_{\unit}$ and  $\phi _{2}\left( X^{\op},Y^{\op}\right) :=\varphi _{2}\left( X,Y\right):X^{\ast }\otimes Y^{\ast
}\rightarrow \left( X\otimes Y\right) ^{\ast }$ by
\begin{equation}\label{form:varphi}
 \ev_{X\otimes Y}\circ (\varphi _{2}\left( X,Y\right)\otimes X \otimes Y)=
 \left( \ev_{X}\otimes \ev_{Y}\right) \circ ( X^{\ast }\otimes \left(c_{X,Y^{\ast }}\right) ^{-1}\otimes Y) .
\end{equation}
Moreover, for every $X$ in $\Cc $, the unit $\eta_X$ and the counit ${\epsilon }_{X^{\op }}=\left(j_{X}\right) ^{\op }$ of the adjunction $(L,R)$ are uniquely defined by the equalities
\begin{eqnarray}
\mathrm{ev}_{X}\circ c_{X,X^{\ast }} &=&\mathrm{ev}_{X^{\ast }}\circ \left(
\eta_{X}\otimes X^{\ast }\right) , \label{form:etaX}\\
\mathrm{ev}_{X}\circ \left( c_{X^{\ast },X}\right) ^{-1} &=&\mathrm{ev}_{X^{\ast }}\circ \left( j_{X}\otimes X^{\ast }\right)\label{form:jX} .
\end{eqnarray}
\end{proposition}

\begin{remark}\label{rem:Rbraided} We have noticed in Subsection \ref{sub:liftability} that a liftable pair induces an adjunction at the level of bialgebras in case the right adjoint is also braided. In case of Proposition \ref{prop:extensionGV},  the lax monoidal functor $\left( R,\phi _{2},\phi_{0}\right) $ is braided if and only if the following diagram commutes
  $$\xymatrixcolsep{3cm}
  \xymatrix{R(X^\op)\otimes R(Y^\op)\ar[r]^{c_{R(X^\op), R(Y^\op)}}\ar[d]_{\phi _{2}(X^\op, Y^\op)}& {R(Y^\op)\otimes R(X^\op)}\ar[d]^{\phi _{2}(Y^\op, X^\op)}\\
  R(X^\op\otimes Y^\op)\ar[r]^{Rc_{X^\op, Y^\op}}& {R(Y^\op \otimes X^\op)} }$$
  that is if and only if the following diagram commutes
  $$\xymatrixcolsep{3cm}
  \xymatrix{X^*\otimes Y^*\ar[r]^{c_{X^*, Y^*}}\ar[d]_{\varphi _{2}(X, Y)}& {Y^*\otimes X^*}\ar[d]^{\varphi _{2}(Y, X)}\\
  (X\otimes Y)^*\ar[r]^{(c_{X, Y}^{-1})^*}& (Y \otimes X)^* }$$
Set $f:=(c_{X, Y}^{-1})^*\circ \varphi _{2}(X, Y)$. We compute
\begin{align*}
  \ev_{Y\otimes X}\circ (f\otimes Y\otimes X)
  &=\ev_{Y\otimes X}\circ ((c_{X, Y}^{-1})^*\otimes Y\otimes X)\circ (\varphi _{2}(X, Y)\otimes Y\otimes X)\\
  &=\ev_{X\otimes Y}\circ ((X\otimes Y)^*\otimes c_{X, Y}^{-1})\circ (\varphi _{2}(X, Y)\otimes Y\otimes X)\\
  &=\ev_{X\otimes Y}\circ (\varphi _{2}(X, Y)\otimes X\otimes Y)\circ (X^*\otimes Y^*\otimes c_{X, Y}^{-1})\\
   &\overset{\eqref{form:varphi}}=\left( \ev_{X}\otimes \ev_{Y}\right) \circ ( X^{\ast }\otimes c_{X,Y^{\ast }} ^{-1}\otimes Y)\circ (X^*\otimes Y^*\otimes c_{X, Y}^{-1})\\
  &=\ev_X\circ (X^*\otimes \ev_Y\otimes X)\\
  &=(\ev_Y\otimes \ev_X)\circ (Y^*\otimes c_{Y,X^*}^{-1}\otimes X)\circ (c_{Y^*, X^*}^{-1}\otimes Y\otimes X)\\
 &\overset{\eqref{form:varphi}}=\ev_{Y\otimes X}\circ (\varphi _{2}(Y, X)\otimes Y\otimes X)\circ  (c_{Y^*, X^*}^{-1}\otimes Y\otimes X)
\end{align*}
so that $f=\varphi _{2}(Y, X)\circ c_{Y^*, X^*}^{-1}$ i.e. $(c_{X, Y}^{-1})^*\circ \varphi _{2}(X, Y)=\varphi _{2}(Y, X)\circ c_{Y^*, X^*}^{-1}$.

As a consequence, $\left( R,\phi _{2},\phi_{0}\right) $ is a braided monoidal functor if and only if $\varphi _{2}(Y, X)\circ c_{X^*, Y^*}=\varphi _{2}(Y, X)\circ c_{Y^*, X^*}^{-1}$ for all objects $X,Y$ in $\Cc$. Equivalently one has to ask that $\varphi _{2}(Y, X)\circ c_{X^*, Y^*}\circ c_{Y^*, X^*}=\varphi _{2}(Y, X)$ for all objects $X,Y$ in $\Cc$. In particular, if $\varphi _{2}$ is a monomorphism on components, this is equivalent to ask that $c_{X^*, Y^*}\circ c_{Y^*, X^*}=1_{Y^*\ot X^*}$ which is quite close to requiring that $\Cc$ is symmetric.
\end{remark}

Proposition \ref{prop:extensionGV} suggests a suitable context to obtain examples of liftable pairs of functors, as the following result shows.

\begin{proposition}\cite[Proposition 4.6]{AGM}
\label{lem:Barop}For a monoidal category $\Cc$, suppose a lax monoidal functor $\left( R,\phi _{2},\phi _{0}\right) :\Cc^{\op}\to \Cc$ has a left adjoint $L=R^{\op}$. If the induced colax monoidal structure on $L$ by  \eqref{form:PsiFromPhi2} and \eqref{form:PsiFromPhi0} is specifically $(\phi _{2}^{\op},\phi _{0}^{\op})$, then $\overline{R}=\left( \underline{L}\right) ^{\op}$.  Moreover, if $\overline{R}$ has a left adjoint,
then $\left( L,R\right) $ is liftable.
\end{proposition}


\begin{remark}
\label{rem:regop}Keeping the hypotheses of Proposition \ref{lem:Barop}  and
assuming that $\Aa  =\Cc ^{\op }$, the functor $\mho :%
\Coalg \left( \Cc \right) \rightarrow \Cc $ has a right
adjoint and $\Coalg \left( \Cc \right)$ has equalizers, then, by Theorem \ref{teo:Tambara}, the functor $\overline{R}$ has a left adjoint $\overline{L}$ which we will now describe.
We have
\begin{equation*}
\xymatrix{ \Aa= \Cc ^\op  \ar@<.5ex>[rr]^-{R} && \Bb=\Cc
\ar@<.5ex>[ll]^-{L}. }
\end{equation*}
By assumption, $\left( R,\phi _{2},\phi _{0}\right) $ is lax monoidal and it has a left adjoint $L=R^{\op}$. Moreover, the induced colax monoidal structure $\left(\psi _{2},\psi _{0}\right)$ on $L$ by  \eqref{form:PsiFromPhi2} and \eqref{form:PsiFromPhi0} is required to be specifically $(\phi _{2}^{\op},\phi _{0}^{\op})$. As in the Setting \ref{not:dualinduced} we can regard $L$ as a contravariant functor $(-)^\diamw:\Cc\to \Cc$ and define $\varphi _{2}\left( B,B\right):B^\diamw\otimes B^\diamw\to (B\otimes B)^\diamw$ and $\varphi _{0}:\unit \to \unit^\diamw $ by setting $\varphi _{2}\left( B,B\right)^{\op }=\psi _{2}\left( B,B\right)$ and $\varphi _{0}^{\op }=\psi _{0}$ respectively.
Note that, in view of the requirement  $\left(\psi _{2},\psi _{0}\right)=\left( \phi _{2}^{\op },\phi _{0}^{\op }\right)$, we also have $\varphi _{2}\left( B,B\right):=\phi _{2}\left( B^{\op },B^{\op }\right)$ and $\varphi _{0}=\phi _{0}$.

Assume further that
\begin{itemize}
\item the tensor products preserve monomorphisms in $\Cc $;

\item $\phi _{0}:\unit \rightarrow \unit^\diamw $ is invertible and
the components of $\varphi _{2}\ $are monomorphisms;

\item for every $X,Y\in\Cc $, any morphism $X\rightarrow Y^\diamw$ in
$\Cc $ has a (StrongEpi,Mono)-factorization;

\item for every algebra $\overline{B}$ in $\Bb$, there is a set $\mathcal{S}_{B}$ of good subobjects of $B^\diamw$ in $\Cc$ such that each good subobjects of $B^\diamw$ in $\Cc$ is isomorphic to an element  in $\Ss_{B}$.
\end{itemize}

Since $\phi _{0}$ is invertible, \eqref{eq:goodeps}  rewrites as $\varepsilon _{E}=\phi _{0}^{-1}\circ
(u_{B})^\diamw\circ e$, so that $\varepsilon _{E}$ is completely determined and can
be ignored in the definition of good object. Thus it suffices to consider terns $\left( E,\Delta
_{E},e\right) $ such that \eqref{eq:goodDelta} is fulfilled.

All the above assumptions permit to apply Proposition \ref{pro:brutop}.

Then there is $(\underline{B^\diamb}, \theta_B:B^\diamb\to B^\diamw)\in\mathcal{S}_{B}$ which is a terminal good subobject of $B^\diamw$ in $\Cc$  such that $(B^\diamb, \theta_B)$ is the sum of the family of subobjects $(D,e_D)_{(\underline{D},e_D)\in \Ss_{B}}$ of $B^\diamw$.

Finally $\overline{R}$ has a left adjoint $\overline{L}$ explicitly given by $\overline{L}\,\overline{B}=(\underline{B^\diamb})^\op$.
\end{remark}

The following result is a pre-rigid version of
\cite[Proposition 8]{PS}. Note that, given a closed monoidal category $(\Cc,\otimes,\unit)$, the right adjoint $[-,\unit]_r$ of the functor $(-)\otimes \unit$ induces the functor $R=[-,\unit]_r:\Cc^\op\to \Cc$. The authors therein call $\overline{R}=\Alg(R)$ the \emph{dual monoidal functor} and prove it has a left adjoint in case $\Cc$ is locally presentable. Note also that a closed monoidal category is in particular pre-rigid with pre-dual given by $(-)^*:=[-,\unit]_r$.

\begin{corollary}
\label{coro:Men2} Let $\Cc $ be a pre-rigid braided monoidal
category. Assume that $\Cc $ is locally presentable and that the
tensor products preserve directed colimits.\newline
Then $\left( (-)^{*}:\Cc \rightarrow \Cc ^{\op %
},(-)^{*}:\Cc ^{\op }\rightarrow \Cc \right)$ is a
liftable pair of adjoint functors.
\end{corollary}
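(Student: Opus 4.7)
The plan is to reduce to Corollary~\ref{coro:Men0}: it suffices to prove that $\Coalg(\Cc)$ has equalizers and that $\mho:\Coalg(\Cc)\to\Cc$ admits a right adjoint. I would handle both at once by passing to the opposite category: setting $\Aa:=\Cc^{\rm op}$, one has $\Alg(\Aa)=\Coalg(\Cc)^{\rm op}$, and $\Omega:\Alg(\Aa)\to\Aa$ is $\mho^{\rm op}$. It therefore suffices to verify the hypotheses of Proposition~\ref{pro:Tambara}(3) for this $\Aa$, namely that $\Coalg(\Cc)^{\rm op}$ is complete, well-powered, and admits a cogenerating family; equivalently, that $\Coalg(\Cc)$ is cocomplete, co-well-powered, and possesses a small generating family.

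The key step I would next carry out is to show that, under the stated hypotheses, $\Coalg(\Cc)$ is itself locally presentable. Once this is known, the three desiderata of the previous paragraph become standard features of locally presentable categories; in particular, cocompleteness also follows directly from the fact that the forgetful functor $\mho$ creates colimits from $\Cc$. With these in place, Proposition~\ref{pro:Tambara}(3) applied to $\Aa=\Cc^{\rm op}$ yields coequalizers in $\Alg(\Cc^{\rm op})$ together with a left adjoint to $\Omega$; dualizing, $\Coalg(\Cc)$ acquires equalizers and $\mho$ acquires a right adjoint, and Corollary~\ref{coro:Men0} then delivers liftability of the pair $((-)^{*},(-)^{*})$.

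The main obstacle is precisely the local presentability of $\Coalg(\Cc)$: local presentability of $\Cc$ alone would not suffice. It is the extra hypothesis that the tensor products preserve directed colimits that makes the endofunctor $X\mapsto X\otimes X$, underlying the comonoid structure, accessible; this allows one to present $\Coalg(\Cc)$ as a category of coalgebras for an accessible comonoidal construction carved out by (limit-type) coassociativity and counitality conditions, and then to invoke the standard results of Ad\'amek--Rosick\'y (together with refinements worked out for comonoids in works such as \cite{PS}) to conclude local presentability. This is the step I expect to require the most care in a fully detailed write-up.
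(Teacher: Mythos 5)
Your proposal is correct and rests on the same essential input as the paper, but extracts the two hypotheses of Corollary~\ref{coro:Men0} by a different intermediate route. The paper's proof is very short: it cites the proof in Porst's paper \cite{Po} to get that $\Coalg(\Cc)$ is locally presentable \emph{and comonadic} over $\Cc$; comonadicity immediately hands over the right adjoint to $\mho$, and completeness of a locally presentable category (\cite[Corollary 1.28]{AR}) hands over equalizers, after which Corollary~\ref{coro:Men0} finishes. You instead take only the local presentability of $\Coalg(\Cc)$ (your acknowledged hard step, which is exactly the same Porst-type accessibility argument the paper leans on) and then feed cocompleteness, co-well-poweredness and a small generating set of $\Coalg(\Cc)$ into Proposition~\ref{pro:Tambara}(3) applied to $\Aa=\Cc^{\rm op}$, so that the right adjoint to $\mho$ comes out of the special adjoint functor theorem rather than out of comonadicity; equalizers in $\Coalg(\Cc)$ you could in fact read off directly from completeness of a locally presentable category, so the detour through coequalizers in $\Alg(\Cc^{\rm op})$ is sound but redundant. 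Your SAFT variant is precisely the alternative the authors themselves sketch in Remark~\ref{finalremark} (where they note that a strong generator for $\Coalg(\Cc)$, guaranteed by \cite[Theorem 1.20]{AR}, is what makes the dual SAFT applicable); what the paper's comonadicity route buys is brevity and the extra structural information that $\mho$ is comonadic, while your route makes visible exactly which size conditions on $\Coalg(\Cc)$ are being used and connects the corollary to the machinery of Section~\ref{tamDub}.
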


\begin{proof}
Since $\Cc $ is monoidal and locally presentable and since the tensor
products preserve directed colimits, by the proof of \cite[page 8]{Po}
(which does not use the symmetry assumption present in the definition of
admissible category), we have that $\Coalg (\Cc )$ is locally
presentable and comonadic over $\Cc $. In particular the functor $%
\mho:\Coalg (\Cc )\to\Cc $ has a right adjoint. By \cite[Corollary 1.28]{AR}, the category $\Coalg (\Cc )$ is complete
so that it has equalizers. We conclude by Proposition \ref{pro:Men0}.
\end{proof}

\begin{remark}
In the setting of  Corollary \ref{coro:Men2}, since $\Cc $ is a locally presentable category, 
it has (StrongEpi, Mono)-factorization of morphisms.
Thus, part of the conditions of Remark \ref{rem:regop} are automatically satisfied.
\end{remark}

${\Vec }_{G}$ is an example of a locally presentable category by \cite%
[Theorem 10]{Po0}. Since the tensor product in $\Vec _{G}$ is $%
\otimes _{k},$ it preserves directed colimits. Thus one can also apply
Corollary \ref{coro:Men2} to this category. Indeed locally presentability is even too much to gain the liftability of the adjunction induced by the pre-dual because, as we will see, the hypotheses of Proposition \ref{pro:Men0} are sufficient.

\subsection{A group-graded version of Sweedler's finite dual}\label{GVec}

We now take a closer look at two examples, by first taking $\Cc =%
\Vec $, then by taking $\Cc =%
\Vec _{G}$ which is our case of main
interest.\medskip

\textbf{The vector space case}. Let us consider the case of vector spaces, putting $\Cc =\Vec $, which is a pre-rigid braided monoidal category, the pre-dual of a vector space $V$ being given by the linear dual $V^{\ast }:=\hom _{k}\left( V,k\right)$. By Proposition \ref{prop:extensionGV}, we have a functor $R=(-)^*:\Cc^\op \rightarrow \Cc ,X\mapsto
X^{\ast }$ with left adjoint $L=R^\op$. The maps $\varphi _{2}$ and $\phi _{0}$ of Proposition \ref{prop:extensionGV} are defined by
\begin{align*}
\varphi _{2}\left( X,Y\right) &:X^{\ast }\otimes Y^{\ast }\rightarrow
\left( X\otimes Y\right) ^{\ast },\quad f\otimes g\mapsto m_{\Bbbk }\left(
f\otimes g\right) , \\
\phi _{0} &:k\rightarrow k^{\ast },\quad a\mapsto a{1_{k}}.
\end{align*}
Note that all the requirements of Remark \ref{rem:regop} are satisfied (in particular all epimorphisms are regular whence strong), where, given
an algebra $\overline{B}=\left( B,m_{B},u_{B}\right) $ in $\Cc ,$ we let $\mathcal{S}_{B}$ be the set of all good
subspaces of $B^{\ast }$ (we will use the word subspace when the monomorphism
is an inclusion). As a consequence,
$\overline{L}\,\overline{B}=(\underline{B^\circ})^\op$ where $B^\circ $ is the sum of all good
subspaces of $B^{\ast }$.
By \cite[pages 19-20]{Michaelis-LieCoalg} we know that $B^\circ $ is
exactly the Sweedler's finite dual of $B$. \medskip

\textbf{The group-graded case}. Let $G$ be an abelian group, with neutral element $e$ and
let $\Vec _{G}$ be the category whose objects are vector spaces (over
a field $k$) graded by the group $G$. For objects $V=\oplus_{g\in G}V_g,
W=\oplus_{g\in G}W_g \in \Vec _{G}$, the set of morphisms in $\mathsf{%
Vec}_G$ (i.e. degree-preserving $k$-linear maps) will be denoted as $\mathrm{%
Hom}(V,W)$. The category $\Vec _{G}$ admits a monoidal structure,
which we now briefly recall. If $V,W \in \Vec _{G}$, then $V\otimes
W:= \bigoplus_{g}(\oplus_{xy=g} V_x \otimes_k W_y)$ becomes an object in $%
\Vec _G$. The unit object is $k=k_e$. Taking associativity and unit
constraints to be trivial, $(\Vec _G, \otimes, k)$ indeed becomes a
monoidal category. \newline
Note that the monoidal category $\Vec_G$ is (right) closed. In fact we can consider the right adjoint to the endofunctor $\left( -\right) \otimes V$ of $\Vec _{G}$ (tensor product of graded vector spaces) and denote this
adjoint by $\mathrm{HOM}\left( V,-\right) $.
Since $G$ is abelian, we have that $\mathrm{HOM}\left( V,W\right)
=\oplus _{g\in G}\mathrm{HOM}\left( V,W\right) _{g}$ where $$\mathrm{HOM}%
\left( V,W\right) _{g}=\left\{ f\in \hom _{k}\left( V,W\right) \mid
f\left( V_{h}\right) \subseteq W_{hg}\text{ for every }h\in G\right\},$$ for
any $V,W\in \Vec _{G}$.
As we already mentioned that a (right) closed monoidal category is pre-rigid, we get that $\Vec %
_G$ is pre-rigid. To avoid confusion with the
usual (non-graded) linear dual of a vector space, the pre-dual of a $G$-graded vector space $V=\oplus_g V_g$ in $\Vec_{G}$  will be denoted by $V^{\diamw}$. Thus $V^{\diamw}:=\mathrm{HOM}%
\left( V,k\right) .$ We can write explicitly the graduation of $V^{\diamw}$ as
\begin{eqnarray*}
\left( V^{\diamw}\right) _{g} &=&\mathrm{HOM}\left( V,k\right) _{g}=\left\{
f\in \hom _{k}\left( V,k\right) \mid f\left( V_{h}\right) \subseteq
k_{hg}\text{ for every }h\in G\right\} \\
&=&\left\{ f\in \hom _{k}\left( V,k\right) \mid f\left( V_{h}\right)
=0\text{ for every }h\in G,h\neq g^{-1}\right\} \cong \hom _{k}\left(
V_{g^{-1}},k\right)=(V_{g^{-1}})^* .
\end{eqnarray*}%
In order to discuss braided structures on $\Vec _G$, recall that a
bicharacter on $G$ is a map $\alpha: G\times G\to k\setminus\{0\}$ such that
\begin{equation*}
\alpha(gh,l)=\alpha(g,l)\alpha(h,l)\qquad\mathrm{and}\qquad\alpha(g,hl)=\alpha(g,h)%
\alpha(g,l),\qquad \text{for all } g,h,l \in G.
\end{equation*}
Letting $\alpha$ be a bicharacter, we can define a braiding $c^{\alpha}$ on $%
\Vec _G$, given on homogeneous objects by
\begin{equation*}
c^{\alpha}_{V_g,W_h}: V_g\otimes W_h\to W_h\otimes
V_g,\qquad v\otimes w\mapsto \alpha(g,h)w\otimes v.
\end{equation*}
We notice that, in order for $c^{\alpha}$ to be a morphism in $\Vec _G$, we need that $G$ is abelian. Remark also that $%
c^{\alpha}$ is a symmetry if and only if moreover holds that $%
\alpha(g,h)\alpha(h,g)=1, ~\forall g,h\in G$. In this case we say that $\alpha$ is \emph{skew-symmetric}. We shall denote the
thus-obtained braided monoidal category as $\Vec _G^{\alpha}$.
\newline
Since $\Vec _G^{\alpha}$ is a braided and pre-rigid monoidal category, we can use Proposition \ref{prop:extensionGV}
to get that
\begin{equation*}
R:=(-)^{\diamw}: (\Vec _{G}^{\alpha})^{\op }\to \Vec %
_{G}^{\alpha}
\end{equation*}
is a self-adjoint lax monoidal functor, for any bicharacter $\alpha$.
\newline
Now, using \cite[Corollary 4.6]{AI} (note that $\Vec_{G}$ can be regarded as the category of comodules over the
group-algebra $kG$), the forgetful functor $\mho: \Coalg(\Vec _{G})\to \Vec_G$
has a right adjoint. Moreover any parallel pair $f,g:C\to D$ in $\Coalg (\Vec _{G})$ has equalizer given by  $$\{c\in C \mid \sum c_1\otimes_k f(c_2)\otimes_k c_3=\sum c_1\otimes_k g(c_2)\otimes_k c_3\},$$ where we are using Sweedler's notation for the comultiplication\footnote{note that it coincides with the equalizer of the same pair in $\Coalg (\Vec)$, see e.g. \cite[Remark 1.2]{Agore}.}.

As a consequence, by Proposition \ref{pro:Men0} we can conclude that the adjoint pair
of functors $(L,R)$ introduced above is liftable.

Although it does not seem to appear in
literature, the left adjoint $\overline{L}$ of $\overline{R}=\Alg(R)$ -whose existence is part of the definition of a liftable pair of adjoint functors- can be described explicitly. It is our purpose here to do so. Indeed, it is shown below that, given an algebra $%
\overline{B}=\left( B,m_{B},u_{B}\right) $ in $\Vec _{G},$ the object $\overline{%
L}\,\overline{B}$ can be identified with the biggest ``good'' $G$-graded
subspace of $B^{\diamw}$. More precisely, $\overline{L}\,\overline{B}=\left(
\underline{B^\diamb }\right) ^{\op }$ where
\begin{equation*}
B^\diamb=\left\{ f\in B^{\diamw}\mid \xi _{B}\left( f\right) \text{
vanishes on some }I\in \mathcal{I}_{B}^{f}\right\},
\end{equation*}
$\xi_{B}:B^{\diamw}=\mathrm{HOM}\left( B,k\right)\to \hom _{k}(B,k)=B^*$
denoting the canonical injection and $\mathcal{I}_{B}^{f}$ being the set of
finite-codimensional $G$-graded ideals of $\overline{B}.$

\begin{remark}
\label{rem:vec} Note that, if we take $G$ to be the trivial group in the
above discussion, we recover the case of vector spaces. When taking $%
G=\langle g | g^{2}=e \rangle$, the cyclic group of order two, and $\alpha$
trivial everywhere except for $\alpha(g,g)=-1$, one obtains the super vector
space case; this incorporates \cite[Remark 3.1]{GV-OnTheDuality}.
\end{remark}

Let us proceed with the details of the computation of $\overline{L}\,\overline{B}$. 

Note that, given objects $T$ and $X$ in $\Vec_G$, for every morphism $t:T\otimes X\to k$, the map $t^\dag:T\to X^{\diamw }$ of Definition \ref{def:pre-rigid} is uniquely determined by the equality $t=\mathrm{ev}_{X}\circ (t^\dag\otimes X)$ i.e. $t^\dag(a)=t(a\otimes -)$ for $a\in T$.
Let us start by describing explicitly some of the maps given in Proposition \ref{prop:extensionGV}.

\begin{lemma}\label{lem:phiG}
The map $\varphi _{2}\left( X,Y\right) :X^{\diamw }\otimes
Y^{\diamw }\rightarrow \left( X\otimes Y\right) ^{\diamw }$ is given, for $f\in \left( X^{\diamw
}\right) _{a},g\in \left( Y^{\diamw }\right) _{b},$ by $\varphi _{2}\left( X,Y\right) \left( f\otimes g\right) :=\alpha \left(
a,b\right) m_{k}\left( f\otimes g\right) .$ The map $\phi _{0}:k\rightarrow k^{\diamw }$ is given, for $\lambda \in k,$ by the
equality $\phi _{0}\left( \lambda \right) =\lambda 1_{k}.$ In particular $\phi _{0}$ is invertible and the components of $\varphi _{2}$ are monomorphisms.
\end{lemma}

\begin{proof}
By
Proposition \ref{prop:extensionGV}, $\varphi _{2}\left( X,Y\right) $ is given, for $f\in \left( X^{\diamw
}\right) _{a},g\in \left( Y^{\diamw }\right) _{b},$ by the equality\begin{equation*}
\varphi _{2}\left( X,Y\right) \left( f\otimes g\right) = \left(
\mathrm{ev}_{X}\otimes \mathrm{ev}_{Y}\right) \left( X^{\diamw }\otimes \left(
c_{X,Y^{\diamw }}\right) ^{-1}\otimes Y\right) \left( f\otimes g\otimes
-\right) .
\end{equation*}Given $x\in X_{c},y\in Y_{d}$, we have $f(x)=\delta _{ca,e}f(x)$ and $g(y)=\delta _{db,e}g(y)$ so that
\begin{eqnarray*}
\varphi _{2}\left( X,Y\right) \left( f\otimes g\right) \left( x\otimes
y\right) &=&\left[ \left( \mathrm{ev}_{X}\otimes \mathrm{ev}_{Y}\right)
\circ \left( X^{\diamw }\otimes \left( c_{X,Y^{\ast }}\right) ^{-1}\otimes
Y\right) \right] \left( f\otimes g\otimes x\otimes y\right) \\
&=&
\alpha \left(
c,b\right) ^{-1}f\left( x\right) g\left( y\right) = \delta _{ca,e}\delta _{db,e}\alpha \left( c,b\right) ^{-1}f\left(
x\right) g\left( y\right) \\
&=&
\alpha \left( a,b\right) f\left( x\right) g\left( y\right)
=\alpha \left( a,b\right)
m_{k}\left( f\otimes g\right) \left( x\otimes y\right) .
\end{eqnarray*}Thus $\varphi _{2}\left( X,Y\right) \left( f\otimes g\right) :=\alpha \left(
a,b\right) m_{k}\left( f\otimes g\right) .$
On the other hand $\phi _{0}$ is uniquely determined, for $\lambda \in k,$ by the
equality $
\phi _{0}\left( \lambda \right) =m_{k}\left( \lambda \otimes -\right)
=\lambda 1_{k}.$
\end{proof}

\begin{remark}
We already noticed that the lax monoidal functor $R:=(-)^{\diamw}: (\Vec _{G}^{\alpha})^{\op }\to \Vec %
_{G}^{\alpha}$ induced by the pre-dual is part of a liftable adjoint pair
of functors $(L,R)$. Moreover, by Lemma \ref{lem:phiG} we know that the components of $\varphi _{2}$ are monomorphisms. Thus, in view of Remark \ref{rem:Rbraided}, we have that $R$ is braided if and only if $c^\alpha_{X^*, Y^*}\circ c^\alpha_{Y^*, X^*}=1_{Y^*\ot X^*}$ for all objects $X,Y$ in $\Cc$. In particular this holds if $c^\alpha$ is a symmetry, which happens if and only if $\alpha$ is skew-symmetric. In this case, we get an induced auto-adjunction $(%
\underline{\overline{L}},\underline{\overline{R}})$ on the category of
bialgebras in $\Vec _{G}^{\alpha}$, i.e. ``color bialgebras'' (in the
sense of \cite[Section 1.4]{AAB} e.g.), for any such a bicharacter $\alpha$.
\end{remark}

Note that all the requirements of Remark \ref{rem:regop} are satisfied, by
the discussion above where we chose $\mathcal{S}_{B}$ to be the set of all good $G$-graded subspaces of $%
B^{\diamw }$ (as in case of $\Vec$, we will use the word subspace when the monomorphism
is an inclusion).
Thus, given an algebra $\overline{B}=\left(
B,m_{B},u_{B}\right) $ in $\Cc =\Vec _{G},$ we get that $%
\overline{L}\,\overline{B}$ can be identified with the biggest good $G$-graded
subspace of $B^{\diamw }$. Explicitly, there is $(\underline{B^\diamb}, \theta_B:B^\diamb\to B^\diamw)\in\mathcal{S}_{B}$, where
$\theta _{B}:B^\diamb\rightarrow B^{\diamw }$ is the canonical inclusion, which is a terminal good $G$-graded subspace of $B^\diamw$  such that $(B^\diamb, \theta_B)$ is the sum of the family of $G$-graded subspaces $(D,e_D)_{(\underline{D},e_D)\in \Ss_{B}}$ of $B^\diamw$.
Finally $\overline{L}$ is given by $\overline{L}\,\overline{B}=(\underline{B^\diamb})^\op$.\medskip

To round off this example, let us further refine the description of $(B^\diamb, \theta_B)$.
To this aim denote by $\mathcal{S}_{B}^{f}$ the set of
finite-dimensional good $G$-graded subspaces of $B^{\diamw }.$

\begin{lemma}\label{lem:bullet1}
$(B^\diamb, \theta_B)$ is the sum of the family of finite-dimensional $G$-graded subspaces $(D,e_D)_{(\underline{D},e_D)\in \Ss_{B}^f}$ of $B^\diamw$.
\end{lemma}

\begin{proof}
 Let $\left( E,\Delta _{E},e\right) $ be a good $G$-graded subspace of $B^{\diamw },$ where $e:E\rightarrow B^{\diamw }$ denotes the canonical
inclusion. Then the right-hand side square in the following diagram commutes by \eqref{eq:goodDelta}.
\begin{equation*}
\begin{array}{ccccccc}
\xymatrix{C\ar[d]_ {\Delta _{C} }\ar[rr]^-{\gamma}  && {E}\ar[d]^{\Delta _{E}} \ar[rr]^-{e} && B^{\diamw} \ar[rr]^-{(m_{B})^{\diamw}} && \left( B\otimes
B\right) ^{\diamw }\ar[d]^{1_{\left( B\otimes
B\right) ^{\diamw }}}\\
C\otimes C  \ar[rr]^-{\gamma \ot \gamma} && E\otimes E \ar[rr]^-{e\otimes e} && B^{\diamw }\otimes B^{\diamw }  \ar[rr]^-{\varphi _{2}\left( B,B\right)} && \left( B\otimes
B\right) ^{\diamw }
}
\end{array}
\end{equation*}Consider a subcoalgebra $\left( C,\Delta _{E},\varepsilon_E,\gamma :C\rightarrow E\right)
. $ Hence the external diagram above commutes. This means that $\left(
C,\Delta _{E},e\circ \gamma \right) $ is a good $G$-graded subspace of $B^{\diamw }.$ This proves that a subcoalgebra of a good $G$-graded subspace of
$B^{\diamw }$ is a good subspace of $B^{\diamw }$.

Given $(\underline{D},e_D)\in \Ss_{B}$, we know that $\underline{D}$ becomes a $G$-graded coalgebra, cf. the dual result of Lemma \ref{lem:bast}. Hence we can apply \cite[Theorem 4.5]{AI} to get that each $\underline{D}$ is sum of finite-dimensional $G$-graded subcoalgebras.
Since a subcoalgebra of a good subspace of $B^{\diamw }$ is again a good
subspace of $B^{\diamw },$ we can write $(B^\diamb, \theta_B)$ as the desired sum. \end{proof}

In order to describe the elements in $\mathcal{S}_{B}^{f}$, we first need the following lemma, which further specifies other maps involved in Proposition \ref{prop:extensionGV}.

\begin{lemma}\label{lem:etajective}
The morphisms $\eta_{X},j_{X}:X\rightarrow X^{\diamw \diamw }$ are given, for $x\in X_{a}$, $f\in \left( X^{\diamw
}\right) _{b}$, by
\begin{eqnarray*}
\eta_{X}\left( x\right) \left( f\right) &=&\alpha \left( a,a\right)
^{-1}f\left( x\right) ,\text{ for }x\in X_{a},f\in X^{\diamw }, \\
j_{X}\left( x\right) \left( f\right) &=&\alpha \left( a,a\right) f\left(
x\right) ,\text{ for }x\in X_{a},f\in X^{\diamw }.
\end{eqnarray*}Moreover these $\eta_{X}$ and $j_{X}$ are both injective.
\end{lemma}

\begin{proof}
By Proposition \ref{prop:extensionGV}, for $x\in X_{a}$, $f\in \left( X^{\diamw
}\right) _{b}$, we have
\begin{align*}
\eta_{X}\left( x\right) \left( f\right) &\overset{\eqref{form:etaX}}{=}\mathrm{ev}_{X}c_{X,X^{\diamw }}\left(
x\otimes f\right) =\alpha \left( a,b\right) f\left( x\right) =\delta
_{ab,e}\alpha \left( a,b\right) f\left( x\right) =\alpha \left(
a,a\right)^{-1} f\left( x\right), \\
j_{X}\left( x\right) \left( f\right) &\overset{\eqref{form:jX}}{=}\mathrm{ev}_{X}\left( c_{X^{\diamw
},X}\right) ^{-1}\left( x\otimes f\right) =\alpha \left( b,a\right)
^{-1}f\left( x\right) =\delta _{ab,e}\alpha \left( b,a\right) ^{-1}f\left(
x\right) =\alpha \left( a,a\right) f\left( x\right).
\end{align*}Let us check the injectivity. Let $x\in X_{a}$ be nonzero. If we complete $x$ to a basis of $X$ we can
consider the map $\lambda _{x}\in \hom _{k}\left( X,k\right) $ such
that $\lambda _{x}\left( x\right) =1$ and $\lambda _{x}$ vanishes on the
other elements of the basis. By construction $\lambda _{x}\in \mathrm{HOM}\left( X,k\right) _{a^{-1}}.$ Thus we can compute $\eta_{X}\left( x\right)
\left( \lambda _{x}\right) =\alpha \left( a,a\right) ^{-1}\lambda _{x}\left(
x\right) =\alpha \left( a,a\right) ^{-1}\neq 0.$ This proves that $\left(
\eta_{X}\right) _{a}$ is injective and hence $\eta_{X}$ is injective. Similarly
one gets that $j_{X}$ is injective.
\end{proof}

We are now ready to provide the promised description of $B^\diamb$.

\begin{proposition}\label{pro:bullet2}
We have that $
B^\diamb=\left\{ f\in B^{\diamw }\mid \xi _{B}\left( f\right) \text{
vanishes on some }I\in \mathcal{I}_{B}^{f}\right\}$, where $\mathcal{I}_{B}^{f}$ denotes the set of
finite-codimensional $G$-graded ideals of $\overline{B}$ and $\xi_B:B^{\diamw }\to B^*$ is the canonical injection.
\end{proposition}

\begin{proof}
Let $\left(
E,\Delta _{E},e\right) $ be a finite-dimensional good $G$-graded subspace of
$B^{\diamw }.$ Since $E$ is finite-dimensional we have that $E^{\diamw }=\mathrm{HOM}\left( E,k\right) =\hom \left( E,k\right) =E^{\ast }$ (see \cite[Lemma
3.3.2]{NaVa}). Thus in this case $\eta_{E},j_{E}:E\rightarrow E^{\diamw \diamw }=E^{\ast \ast }.$ Since, by Lemma \ref{lem:etajective}, these maps
are injective maps between spaces with the same dimension, we deduce that $\eta_{E},j_{E}$ are invertible.

Since $(\underline{B^\diamb}, \theta_B:B^\diamb\to B^\diamw)\in\mathcal{S}_{B}$ is a terminal good $G$-graded subspace of $B^\diamw$, there is a $G$-graded coalgebra map $\underline{f}:\underline{E}\rightarrow \underline{B^\diamb}$ such that $\theta _{B}\circ f=e.$ Thus we get the algebra morphism $(\underline{f})^\op:(\underline{B^\diamb})^\op\rightarrow (\underline{E})^\op$. If we regard $(\underline{E}, e)$ as the epi-induced object $\left( \overline{E^\op }=\underline{E}^\op ,e^\op \right) $ of $\overline{B}$, we can rewrite this morphism as $\overline{f^\op }:\overline{L}\,\overline{B}\rightarrow \overline{E^\op }$ where $\overline{f^\op }=(\underline{f})^\op$. Then, if we recall that the canonical projection $p:LB\to  \Omega\overline{L}\,\overline{B}$ is just $p=\theta _{B}^\op$ and we apply to $\overline{f^\op }:\overline{L}\,\overline{B}\rightarrow \overline{E^\op }$ the following adjunction, where $\Aa^\op=\Bb=\Vec_G^\alpha$,
\begin{equation*}
\hom _{\Alg \left( \Aa  \right) }\left( \overline{L}\,\overline{B},\overline{E^\op}\right) \cong \hom _{\Alg \left( \Bb  \right) }\left(
\overline{B},\overline{R}\,\overline{E^\op}\right) :h\mapsto \overline{R}h\circ
\overline{\eta }_{\overline{B}},
\end{equation*} we get the algebra morphism $\overline{\tau_B}:=\overline{R}\,\overline{f^\op }\circ
\overline{\eta }_{\overline{B}}:\overline{B}\to\overline{R}\,\overline{E^\op}$. To get a better description of this morphism, recall that the unit $\overline{\eta }$ of $(\overline{L},\overline{R})$ is given by $\Omega \overline{\eta }_{\overline{B}}=R' \kappa  _{\overline{B}}\circ \eta' _{B}$ where $R':=R\Omega$ and $\eta':=R\alpha L\circ\eta$ (see the proofs of Proposition \ref{pro:TambaraGen} and Theorem \ref{teo:Tambara}). Moreover, by Proposition \ref{pro:induced2}, the morphism $p:LB\to \Omega\overline{L}\,\overline{B}$ can be written in terms of $\kappa_{\overline{B}}:TLB\to \overline{L}\,\overline{B}$ as $p=\Omega \kappa_{\overline{B}}\circ \alpha_{LB}$. Therefore we get
\begin{equation*}
\Omega \overline{\eta }_{\overline{B}}=R\Omega \kappa _{\overline{B}}\circ R\alpha _{LB}\circ \eta _{B}=Rp\circ \eta _{B}=\theta
_{B}^{\diamw }\circ \eta _{B}:B\rightarrow R\Omega \overline{L}\,\overline{B}=\Omega \overline{R}\overline{L}\,\overline{B}=B^{\diamb \diamw },
\end{equation*}
and hence\begin{equation*}
\tau _{B}=\Omega\overline{\tau_B}= R\left( f^\op \right) \circ \Omega \overline{\eta }_{\overline{B}}=f^{\diamw }\circ \theta _{B}^{\diamw }\circ \eta _{B}=\left( \theta _{B}\circ
f\right) ^{\diamw }\circ \eta _{B}=e^{\diamw }\circ \eta _{B}:B\rightarrow E^{\diamw }.
\end{equation*}Since $\overline{\tau_B}$ is a $G$-graded algebra map, then the kernel of the map $\tau _{B}$, say  $I $, is obviously a $G$-graded ideal
of $\overline{B}.$ Consider the following exact sequence\begin{equation*}
0\rightarrow I\xrightarrow{i_{I}}B\xrightarrow{p_{I}}
\frac{B}{I}\rightarrow 0.
\end{equation*}Since it is a sequence in $\Vec _{G}$ -which is a semisimple
category- applying the contravariant functor $\left( -\right) ^{\diamw },$
we get the exact sequence\begin{equation}\label{seq:ex}
0\rightarrow \left( \frac{B}{I}\right) ^{\diamw }\xrightarrow{p_{I}^{\diamw }}B^{\diamw }\xrightarrow{i_{I}^{\diamw }}I^{\diamw
}\rightarrow 0.
\end{equation}Since $I=\mathrm{Ker}\left( \tau _{B}\right) $, there is a $G$-graded algebra injection $\lambda _{I}:\frac{B}{I}\rightarrow E^{\diamw }$ such that
\begin{equation*}
\lambda _{I}\circ p_{I}=\tau _{B}=e^{\diamw }\circ \eta_{B}.
\end{equation*}Since $E$ is finite-dimensional, so is $E^{\diamw }$ and hence $B/I$ is, too. This
shows that $I$ has finite codimension whence $I\in \mathcal{I}_{B}^{f}$. Define the map\begin{equation*}
\chi _{B}:=\left( E\xrightarrow{j_{E}}E^{\diamw \diamw }\xrightarrow{\left( \lambda _{I}\right) ^{{\diamw }}}\left( \frac{B}{I}\right) ^{\diamw }\right) .
\end{equation*}Note that $\chi _{B}$ is surjective as $j_{E}$ is invertible and $\left(
\lambda _{I}\right) ^{{\diamw }}$ is surjective. We compute\begin{equation*}
\left( p_{I}\right) ^{{\diamw }}\circ \chi _{B}=\left( p_{I}\right) ^{{\diamw
}}\circ \left( \lambda _{I}\right) ^{{\diamw }}\circ j_{E}=\left(
\eta_{B}\right) ^{\diamw }\circ e^{\diamw \diamw }\circ j_{E}=\left( \eta_{B}\right)
^{\diamw }\circ j_{B^{\diamw }}\circ e=e
\end{equation*}so that the following diagram commutes\begin{equation*}
\xymatrix{E\ar[rr]^-{\chi _{B}}_-{\cong } \ar[rrd]_{e} &&\left( \frac{B}{I}\right) ^{\diamw } \ar[d]^{p_{I}^{\diamw }=\left( p_{I}\right) ^{{\diamw }} }\\
&& B^{\diamw}  }
\end{equation*}
From this diagram, since $e$ is an inclusion, we deduce that $\chi_B$ is injective. Since we already know that  $\chi_B$ is surjective, we get that $\chi_B$ is invertible and hence $E=\mathrm{Im}\left( e\right) =\mathrm{Im}\left( p_{I}^{\diamw }\right) .$ This
proves, in view of Lemma \ref{lem:bullet1}, that $B^\diamb\subseteq \sum_{I\in \mathcal{I}_{B}^{f}}\mathrm{Im}\left( p_{I}^{\diamw }\right) $.

Conversely, let $I\in \mathcal{I}_{B}^{f}$ and let us check that $\mathrm{Im}\left( p_{I}^{\diamw }\right)$ belongs to $ \mathcal{S}_{B}^{f}.$ Note that $\varphi
_{2}\left( B/I,B/I\right) :\left( B/I\right) ^{\diamw }\otimes \left(
B/I\right) ^{\diamw }\rightarrow \left( B/I\otimes B/I\right) ^{\diamw }$ is an
injective map between spaces with the same dimension as $\left( B/I\right)
^{\diamw }=\left( B/I\right) ^{\ast }$ and $\left( B/I\otimes B/I\right)
^{\diamw }=\left( B/I\otimes B/I\right) ^{\ast },$ $B/I$ being finite-dimensional. As a consequence $\varphi _{2}\left( B/I,B/I\right) $ is
invertible. Thus we can define a unique $\Delta _{\left( B/I\right) ^{{\diamw
}}}$ such that the following diagram commutes\begin{equation*}
\xymatrix{\left( \frac{B}{I}\right) ^{\diamw } \ar[d]_{\Delta_{\left( \frac{B}{I}\right) ^{\diamw } }} \ar[rrd]^{\left(m_{\frac{B}{I}}\right)^{\diamw}} && \\
\left( \frac{B}{I}\right) ^{{\diamw }}\otimes \left( \frac{B}{I}\right)
^{{\diamw }} \ar[rr]^-{\varphi _{2}\left( \frac{B}{I},\frac{B}{I}\right)}&& \left( \frac{B}{I}\otimes \frac{B}{I}\right) ^{{\diamw }}
 }
\end{equation*}
By using the definition of $\Delta _{\left( B/I\right) ^{{\diamw }}}$ and the naturality of $\varphi_2$, one obtains that
\begin{equation*}
\begin{array}{ccccc}
\xymatrix{\left( \frac{B}{I}\right) ^{{\diamw }}  \ar[rr]^{\left( p_{I}\right)
^{{\diamw }}} \ar[d]_{\Delta _{\left( B/I\right) ^{{\diamw }}}} && B^{\diamw}  \ar[rr]^{\left( m_{B}\right)^{{\diamw }}} &&  \left( B\otimes B\right) ^{{\diamw }} \ar[d]^{1_{ (B\otimes B) ^\diamw }}\\
\left( \frac{B}{I}\right) ^{{\diamw }}\otimes \left( \frac{B}{I}\right)
^{{\diamw }} \ar[rr]^{\left( p_{I}\right) ^{{\diamw }}\otimes \left(
p_{I}\right) ^{{\diamw }}} && B^{{\diamw }}\otimes B^{{\diamw
}} \ar[rr]^{\varphi _{2}\left( B,B\right)} && \left(
B\otimes B\right) ^{{\diamw }}
 }
\end{array}
\end{equation*}This means that $\left( \left( B/I\right) ^{{\diamw }},\Delta _{\left( B/I\right) ^{{\diamw }}},p_{I}^{\diamw }\right) $
is a good $G$-graded vector space of $B^{{\diamw }}$and hence $\mathrm{Im}\left( p_{I}^{\diamw }\right) $ is a good $G$-graded subspace of $B^{{\diamw }}$. Thus $\mathrm{Im}\left( p_{I}^{\diamw }\right)$ becomes an object in $\mathcal{S}_{B}^{f}.$
Summing up we proved that $B^\diamb=\sum_{I\in \mathcal{I}_{B}^{f}}\mathrm{Im}\left( p_{I}^{\diamw }\right) .$

In order to arrive at our goal, we now give another description of $\mathrm{Im}\left( p_{I}^{\diamw
}\right) .$ Let $\xi _{X}:X^{\diamw }\rightarrow X^{\ast }$ denote the
canonical injection. Note that, by the commutativity of the following
diagram
\begin{equation*}
\begin{array}{ccc}
\xymatrix{B^{\diamw } \ar[rr]^{i_{I}^{\diamw }} \ar[d]_{\xi _{B}} && I^{\diamw } \ar[d]^{\xi _{I}}\\
B^{\ast } \ar[rr]^{i_{I}^{\ast }} && I^{\ast }}
\end{array}\end{equation*}and the injectivity of $\xi _{I}$, we get the following alternative
description
\begin{equation*}
\mathrm{Im}\left( p_{I}^{\diamw }\right) \overset{\eqref{seq:ex}}=\mathrm{Ker}\left( i_{I}^{\diamw
}\right) =\mathrm{Ker}\left( \xi _{I}\circ i_{I}^{\diamw }\right) =\mathrm{Ker}\left( i_{I}^{\ast }\circ \xi _{B}\right) =\xi _{B}^{-1}\left( \mathrm{Ker}\left( i_{I}^{\ast }\right) \right) .
\end{equation*}Therefore\begin{eqnarray*}
B^\diamb &=&\sum_{I\in \mathcal{I}_{B}^{f}}\mathrm{Im}\left( p_{I}^{\diamw
}\right) =\sum_{I\in \mathcal{I}_{B}^{f}}\xi _{B}^{-1}\left( \mathrm{Ker}\left( i_{I}^{\ast }\right) \right) \overset{(*)}=\xi _{B}^{-1}\left( \sum_{I\in \mathcal{I}_{B}^{f}}\mathrm{Ker}\left( i_{I}^{\ast }\right) \right) \\
&\overset{(*)}=&\xi _{B}^{-1}\left( \bigcup\limits_{I\in \mathcal{I}_{B}^{f}}\mathrm{Ker}\left( i_{I}^{\ast }\right) \right) =\left\{ f\in B^{\diamw }\mid \xi
_{B}\left( f\right) \in \bigcup\limits_{I\in \mathcal{I}_{B}^{f}}\mathrm{Ker}\left( i_{I}^{\ast }\right) \right\}
\end{eqnarray*}
where in $(*)$ we are using that $\{ \mathrm{Ker}\left( i_{I}^{\ast }\right)\mid I\in \mathcal{I}_{B}^{f}\}$ is a direct set of subobjects of $B^*$ i.e., given $I,J\in \mathcal{I}_{B}^{f}$, there is $K\in \mathcal{I}_{B}^{f}$ such that $\mathrm{Ker}\left( i_{I}^{\ast }\right)\subseteq\mathrm{Ker}\left( i_{K}^{\ast }\right)\supseteq \mathrm{Ker}\left( i_{J}^{\ast }\right) $, namely $K=I\cap J$, see \cite[Theorem 8.6(4)]{Popescu} .
\begin{invisible}
Note that the map $$\frac{B}{I\cap J}\hookrightarrow \frac{B}{I}\times \frac{B}{ J},b+I\cap J\mapsto (b+I,b+J)$$ is injective and hence $I\cap J$ is finite-codimensional whenever $I$ and $J$ are. Moreover $$f\in \mathrm{Ker}( i_I^\ast )\Leftrightarrow f_{\mid I}=0\Rightarrow f_{\mid I\cap J}=0\Leftrightarrow f\in \mathrm{Ker}( i_{I\cap J}^\ast )$$
so that $\mathrm{Ker}( i_I^\ast )\subseteq \mathrm{Ker}( i_{I\cap J}^\ast )$, Similarly one has $\mathrm{Ker}( i_J^\ast )\subseteq \mathrm{Ker}( i_{I\cap J}^\ast )$.

Let $\{X_i\mid i\in I\}$ be a direct set of subobjects. Then $\sum_{i\in I} X_i=\cup _{i\in I} X_i$. In fact given $x\in \sum_{i\in I} X_i$ there are $i_1,\ldots,i_n\in I$ such that $x=x_{i_1}+\cdots +x_{i_n}$. Since the set is direct there is $k\in I$ such that $X_{i_1},\ldots,X_{i_n}\subset X_{k} $ and hence $x\in X_{i_1}+\cdots +X_{i_n}\subseteq X_k$. Therefore $\sum_{i\in I} X_i\subseteq \cup _{i\in I} X_i$. The other inclusion is always true.
\end{invisible}

In conclusion, noting that $\xi _{B}(f)\in \mathrm{Ker}\left( i_{I}^{\ast }\right)$ if and only if $\xi _{B}(f)$ vanishes on $I$, we get
$B^\diamb=\left\{ f\in B^{\diamw }\mid \xi _{B}\left( f\right) \text{
vanishes on some }I\in \mathcal{I}_{B}^{f}\right\} $.
\end{proof}

In conclusion, we got an explicit analogue of Sweedler's finite dual in $\Vec_G$. More generally, having in mind that $\Vec_{G}$ can be regarded as the category of comodules over the group-algebra $kG$, we expect that
one could carry out computations as in $\Vec_G$ for
the category of comodules over a coquasi-bialgebra.

\end{document}